\documentclass{article}
%
%

\usepackage{amssymb,amsmath,bm,geometry,graphics,graphicx,url,color}

\def\no{\noindent}
\def\pmatrix{\left(\begin{array}}
\def\endpmatrix{\end{array}\right)}


\def\I{{\cal I}}

\newtheorem{theo}{Theorem}
\newtheorem{lem}{Lemma}

\newtheorem{rem}{Remark}
\newtheorem{defi}{Definition}

\newcommand{\norm}[1]{\left\Vert#1\right\Vert}


\title{Exponential collocation methods for conservative or dissipative systems}

\author{Bin Wang\,
\footnote{School of Mathematical Sciences, Qufu Normal University,
Qufu 273165, P.R. China; Mathematisches Institut, University of
T\"{u}bingen, Auf der Morgenstelle 10, 72076 T\"{u}bingen, Germany.
The research is supported in part by the Alexander von Humboldt
Foundation and by the Natural Science Foundation of Shandong
Province (Outstanding Youth Foundation) under Grant ZR2017JL003.
E-mail:~{\tt wang@na.uni-tuebingen.de} } \and Xinyuan
Wu\thanks{School of Mathematical Sciences, Qufu Normal University,
Qufu 273165, P.R. China; Department of Mathematics, Nanjing
University, Nanjing 210093, P.R. China. The research is supported in
part by the National Natural Science Foundation of China under Grant
11671200. E-mail:~{\tt xywu@nju.edu.cn}} }

\begin{document}
\maketitle

\begin{abstract} In this paper, we propose and analyse a novel class of exponential
collocation methods for solving conservative or dissipative systems
based on exponential integrators and collocation methods. It is
shown that these novel methods can be of arbitrarily high order and
exactly or nearly preserve first integrals or Lyapunov functions. We
also consider order estimates of the new methods. Furthermore, we
explore and discuss the application of our methods in important
stiff gradient systems, and it turns out that our methods are
unconditionally energy-diminishing and strongly damped even for very
stiff gradient systems. Practical examples of the new methods are
derived and the efficiency and superiority are confirmed and
demonstrated by three numerical experiments including a nonlinear
Schr\"{o}dinger equation. As a byproduct  of this paper,
arbitrary-order trigonometric/RKN   collocation methods are also
presented and analysed for second-order highly oscillatory/general
systems. The paper is accompanied by numerical results that
demonstrate the great potential of this work.
\medskip
\no{\bf Keywords:} exponential integrators, energy-preserving
algorithms, first integral, Lyapunov function, finite element
methods, collocation methods

\medskip
\no{\bf MSC:} 65L05, 65L60,  65P10
\end{abstract}
\section{Introduction}
 In this paper, we are  concerned with systems of ordinary
differential equations (ODEs) of the form
\begin{equation}\label{IVPPP}
y^{\prime}(t)=Q \nabla H(y(t)),\quad
y(0)=y_{0}\in\mathbb{R}^{d},\quad t\in[0,T],
\end{equation}
where $Q$ is an  invertible and $d \times d$ real matrix, and $H:
\mathbb{R}^{d}\rightarrow\mathbb{R}$ is defined  by
\begin{equation}\label{H}
 H(y)=  \dfrac{1}{2}y^{\intercal}My+V(y).
\end{equation}
Here $M$ is a $d \times d$ symmetric real matrix, and $V:
\mathbb{R}^{d}\rightarrow\mathbb{R}$ is a differentiable function.

It is important to note that the system \eqref{IVPPP}  exhibits
 remarkable  geometrical/physical  structures. If the matrix $Q$ is
skew symmetric, then \eqref{IVPPP} is a conservative system with the
first integral $H$: i.e.,
$$H(y(t))\equiv H(y_0)\qquad \textmd{for}\ \    \textmd{any}\ \  t\geq0.$$
When the matrix  $Q$ is negative semidefinite, then \eqref{IVPPP} is
a dissipative system with the Lyapunov function $H$: i.e.,
$$H(y(t_2))\leq H(y(t_1))\quad \textmd{if}\ \ t_2\geq t_1.$$
In this paper, we  call $H$  energy  for both cases in a broad
sense.  The objective of this paper is to design and analyse a class
of novel arbitrary-order exponential energy-preserving collocation
methods  to preserve first integrals or Lyapunov functions of the
conservative/dissipative system \eqref{IVPPP}.

For brevity,    let
\begin{equation*}\label{AG}
A=QM,\ \ g(y(t))=Q\nabla V(y(t)).
\end{equation*}
 We then  rewrite the system \eqref{IVPPP} as
\begin{equation}\label{IVP1}
y^{\prime}(t)=Ay(t)+g(y(t)),\quad y(0)=y_{0}\in\mathbb{R}^{d}.
\end{equation}
As is known, the exact solution of \eqref{IVPPP} or \eqref{IVP1} can
be represented by the variation-of-constants formula (the Duhamel
Principle)
  \begin{equation}
 y(t)= e^{tA}y_0+  t\int_{0}^1 e^{(1-\tau)tA}g(y(\tau t))d\tau.\\
\label{VOC}%
\end{equation}

The system \eqref{IVPPP} or \eqref{IVP1} plays a prominent role in a
wide range of applications  in physics and engineering, inclusive of
mechanics, astronomy, molecular dynamics, and in problems of wave
propagation in classical and quantum physics (see, e.g.
\cite{hairer2006,Hochbruck2010,xinyuanbook2018,wu2013-book}). Some
highly oscillatory problems and semidiscrete PDEs  such as
semilinear Schr\"{o}dinger equations all fit  this form. One
important example of them  is the multi-frequency highly oscillatory
Hamiltonian systems with the following Hamiltonian
  \begin{equation}H(q,p)=\frac{1}{2}p^{\intercal}\bar{M}^{-1}p+  \frac{1}{2} q^{\intercal}\bar{K} q+U
(q),\label{h os H}%
\end{equation}
where $\bar{K}$ is a symmetric positive semi-definite stiffness
matrix, $\bar{M}$ is a symmetric positive definite mass matrix, and
$U(q)$ is a smooth potential with moderately bounded derivatives.

 In recent decades,  exponential integrators have been widely
 investigated  and developed   as an efficient approach to
integrating
 \eqref{IVP1},  and we refer the
 reader to
 \cite{Berland-2005,Butcher09,Caliari-2009,Calvo-2006,Cano13,Celledoni-2008,Einkemmer-2017,Grimm2006,Hochbruck2005ANM,Hochbruck2009,Ostermann2006,wang2017-JCM,wu-2012-BIT}
for example. Exponential integrators make well use of the
variation-of-constants formula \eqref{VOC},  and their performance
has been evaluated by a range of test problems. A systematic survey
of exponential integrators is referred to \cite{Hochbruck2010}.
However,  apart from symplectic exponential integrators (see, e.g.
\cite{JCP2017_Mei_Wu}),  most  existing publications dealing with
exponential integrators focus on the construction and analysis of
the schemes and never consider deriving energy-preserving
exponential integrators to preserve the first integrals/Lyapunov
functions.
 Energy-preserving exponential integrators, especially
higher-order schemes of them have not been  well researched yet  in
the literature.

On the other hand, various  effective energy-preserving methods have
been proposed and researched for \eqref{IVP1} in the special case of
$A =0$, such as  the average vector field  (AVF) method
   \cite{Celledoni2010,Celledoni14,Quispel08}, discrete gradient (DG) methods
\cite{McLachlan14,McLachlan99,WWIMA2018},    Hamiltonian Boundary
Value Methods (HBVMs) \cite{Brugnano2010,Brugnano2012b},     the
Runge-Kutta-type energy-preserving collocation (RKEPC) methods
 \cite{Cohen-2011,Hairer2010}, time finite elements (TFE) methods
  \cite{Betsch00,Betsch00-1,Hansbo01,Li_Wu(na2016),Tang2012}, and energy-preserving
exponentially-fitted (EPEF) methods
\cite{Miyatake2014,Miyatake2015}. Some numerical methods preserving
Lyapunov functions have also been studied for \eqref{IVP1} with $A
=0$  (see, e.g. \cite{Calvo10,hairer2013,Mclachlan-98}). It is noted
that all these methods are constructed and studied for the special
case $A=0$ and thus they
 do not take  advantage of the structure brought by  the linear term  $Ay$ in
the system \eqref{IVP1}. These methods  could  be applied to
\eqref{IVP1} with $A\neq0$ if the right-hand side of \eqref{IVP1} is
considered as a whole function: i.e., $y'=f(y)\equiv Ay-g(y)$.

 Recently, in order to take advantage the structure of the underlying system and
preserve its energy   simultaneously,
 a novel energy-preserving method has
been studied in \cite{wang2012-1,wu2013-JCP} for second-order ODEs
and a new energy-preserving exponential scheme for the conservative
or dissipative system has been researched  in \cite{Li_Wu(sci2016)}.
However, these two kinds of methods are both based on the  AVF
methods  and thence they  are  only  of order two, in general. This
results in insufficiency  to deal with some practical problems for
high-precision  numerical simulations in sciences and engineering.

On the basis of the facts stated above,  this paper is devoted to
deriving and analysing  novel exponential  collocation methods. For
this purpose, we make  well use of
 the variation-of-constants formula  and the structure
introduced by the underlying system. In such a way,  these
exponential integrators can exactly or nearly preserve the first
integral or the Lyapunov function of \eqref{IVPPP}.

The paper is organised as follows. We first formulate the
exponential  collocation methods for first-order ODEs \eqref{IVPPP}
in the next section. As a byproduct, the trigonometric/RKN
collocation methods for second-order systems are presented in
Section \ref{second-system}. Section \ref{preserving analysis} pays
attention to showing  that the novel methods preserve exactly or
nearly first integrals or Lyapunov functions. From Section
\ref{existence} to Section \ref{algebraic order}, the properties of
the methods  are discussed in detail including existence and
uniqueness, and algebraic order.   In Section \ref{gradient
systems}, we discuss the application of our methods to stiff
gradient systems. Illustrative examples of the new methods are
derived in Section \ref{methods} and numerical experiments are
presented in Section \ref{Numerical experiments}.
 Finally, this paper ends with  some concluding remarks  and
 discussions.

\section{Formulation of  new   methods}\label{fomulation of methods}
 Following \cite{Li_Wu(na2016)}, we define the finite-dimensional
function spaces $Y_{h}$ as follows:
\begin{equation}\label{Yh}\begin{aligned}
Y_{h}&=\text{span}\left\{\tilde{\varphi}_{0}(\tau),\ldots,\tilde{\varphi}_{r-1}(\tau)\right\}\\
&=\left\{\tilde{w} :
\tilde{w}(\tau)=\sum_{i=0}^{r-1}\tilde{\varphi}_{i}(\tau)W_{i},\
\tau\in [0,1],\ W_{i}\in\mathbb{R}^{d}\right\},
\end{aligned}
\end{equation}
where $\{\tilde{\varphi}_{i}\}_{i=0}^{r-1}$ are supposed to be
linearly independent on $I$ and sufficiently smooth.  It is noted
that the notation $\tilde{\varphi}_{i} (\tau)$ is referred to
$\varphi_{i}(\tau h)$ for all the functions $\varphi_{i}$ throughout
this paper. With this definition, we consider another
finite-dimensional function space $X_{h}$  such that $\tilde{w}' \in
Y_h$
 for any $\tilde{w}\in X_h$.

We  introduce  the idea of  the formulation of  methods. Find
$\tilde{u}(\tau)$ with $\tilde{u}(0)=y_{0}$, satisfying that
\begin{equation}\label{projection} \tilde{u}^{\prime}(\tau)=A
\tilde{u}(\tau)+  \mathcal{P}_{h}g(\tilde{u}(\tau)),
\end{equation}
where the projection operation $\mathcal{P}_{h}$  is given by (see
\cite{Li_Wu(na2016)})
\begin{equation}\label{DEF}
\langle
\tilde{v}(\tau),\mathcal{P}_{h}\tilde{w}(\tau)\rangle=\langle
\tilde{v}(\tau),\tilde{w}(\tau)\rangle\quad \text{for any}\ \
\tilde{v}(\tau)\in Y_{h}
\end{equation}
and the inner product $\langle\cdot,\cdot\rangle$ is defined by (see
\cite{Li_Wu(na2016)})
\begin{equation*}
\langle w_{1},w_{2}\rangle=\langle
w_{1}(\tau),w_{2}(\tau)\rangle_{\tau}=\int_{0}^{1}w_{1}(\tau)\cdot
w_{2}(\tau)d\tau.
\end{equation*}

 With regard to  the projection operation $\mathcal{P}_{h}$, we have
the following property which is needed in this paper.
\begin{lem}\label{proj lem}(See \cite{Li_Wu(na2016)}) The projection $\mathcal{P}_{h}\tilde{w}$ can be explicitly expressed as
\begin{equation*}
\mathcal{P}_{h}\tilde{w}(\tau)=\langle
P_{\tau,\sigma},\tilde{w}(\sigma)\rangle_{\sigma},
\end{equation*}
where
\begin{equation}\label{explicit}
\begin{aligned}
&P_{\tau,\sigma}=(\tilde{\varphi}_{0}(\tau),\ldots,\tilde{\varphi}_{r-1}(\tau))\Theta^{-1}(\tilde{\varphi}_{0}(\sigma),\ldots,\tilde{\varphi}_{r-1}(\sigma))^{\intercal},\\
&\Theta=(\langle\tilde{\varphi}_{i}(\tau),\tilde{\varphi}_{j}(\tau)\rangle)_{0\leq
i,j\leq r-1}.\end{aligned}
\end{equation}
When $h$ tends to 0, the limit of $P_{\tau,\sigma}$   exists. If
$P_{\tau,\sigma}$ is computed by    a standard orthonormal basis
$\left\{\tilde{\psi}_{0},\ldots,\tilde{\psi}_{r-1}\right\}$
 of $Y_{h}$ under the inner product $\langle\cdot,\cdot\rangle$,
then   $\Theta$ is an identity matrix and $P_{\tau,\sigma}$ has a
simpler expression:
\begin{equation}\label{PCOEFF}
P_{\tau,\sigma}=\sum_{i=0}^{r-1}\tilde{\psi}_{i}(\tau)\tilde{\psi}_{i}(\sigma).
\end{equation}
\end{lem}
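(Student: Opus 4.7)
The plan is to derive the formula for $\mathcal{P}_h \tilde{w}$ by exploiting that, by definition, $\mathcal{P}_h \tilde{w}$ lies in $Y_h$ and then extracting its coordinates in the basis $\{\tilde{\varphi}_i\}_{i=0}^{r-1}$ from the orthogonality relation \eqref{DEF}. The main steps are:

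First, write $\mathcal{P}_h \tilde{w}(\tau) = \sum_{j=0}^{r-1} \tilde{\varphi}_j(\tau)\, c_j$ with unknown coefficients $c_j \in \mathbb{R}^d$. Next, since the defining identity \eqref{DEF} must hold for all $\tilde{v}\in Y_h$, it suffices to test against each basis element $\tilde{v} = \tilde{\varphi}_i$ for $i=0,\ldots,r-1$; this yields the $r$ linear equations
\begin{equation*}
\sum_{j=0}^{r-1} \langle \tilde{\varphi}_i, \tilde{\varphi}_j\rangle\, c_j \;=\; \langle \tilde{\varphi}_i, \tilde{w}\rangle, \qquad i=0,\ldots,r-1.
\end{equation*}
In matrix form this is $\Theta c = b$, where $\Theta_{ij}=\langle \tilde{\varphi}_i,\tilde{\varphi}_j\rangle$ as defined in \eqref{explicit} and $b_i=\langle \tilde{\varphi}_i,\tilde{w}\rangle$. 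Linear independence of the $\tilde{\varphi}_i$ guarantees that the Gram matrix $\Theta$ is nonsingular, so $c=\Theta^{-1}b$.

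Substituting $c$ back and arranging the basis functions into the row vector $\bigl(\tilde{\varphi}_0(\tau),\ldots,\tilde{\varphi}_{r-1}(\tau)\bigr)$, I obtain
\begin{equation*}
\mathcal{P}_h\tilde{w}(\tau) \;=\; \int_0^1 \bigl(\tilde{\varphi}_0(\tau),\ldots,\tilde{\varphi}_{r-1}(\tau)\bigr)\,\Theta^{-1}\,\bigl(\tilde{\varphi}_0(\sigma),\ldots,\tilde{\varphi}_{r-1}(\sigma)\bigr)^{\intercal}\,\tilde{w}(\sigma)\,d\sigma,
\end{equation*}
which is exactly $\langle P_{\tau,\sigma},\tilde{w}(\sigma)\rangle_{\sigma}$ with $P_{\tau,\sigma}$ given in \eqref{explicit}. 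For the orthonormal-basis case, $\Theta$ reduces to the identity by construction, so $P_{\tau,\sigma}=\sum_i \tilde{\psi}_i(\tau)\tilde{\psi}_i(\sigma)$ as in \eqref{PCOEFF}, and this representation is manifestly basis-covariant so one recovers the same kernel irrespective of the chosen basis.

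The main obstacle I expect is the claim that $P_{\tau,\sigma}$ has a well-defined limit as $h\to 0$. Since $\tilde{\varphi}_i(\tau)=\varphi_i(\tau h)$, naive substitution gives $\tilde{\varphi}_i(\tau)\to\varphi_i(0)$, which would collapse the Gram matrix; the correct interpretation requires scaling the basis so that the $\tilde{\varphi}_i$ remain a well-conditioned basis of $Y_h$ on $[0,1]$ as $h\to 0$. I would handle this by passing to an orthonormal basis $\{\tilde{\psi}_i\}$ — for which \eqref{PCOEFF} applies — and arguing that after Gram--Schmidt the resulting functions depend smoothly on $h$ and admit a non-degenerate limit (e.g. the shifted Legendre basis of the limiting polynomial space), so that $P_{\tau,\sigma}$ inherits the limit. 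Since Lemma~\ref{proj lem} is stated as a citation from \cite{Li_Wu(na2016)}, these verifications can be delegated to the reference.
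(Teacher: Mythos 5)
Your derivation is correct and is exactly the standard Gram-matrix argument one expects here; the paper itself offers no proof of this lemma, simply citing \cite{Li_Wu(na2016)}, and your computation reproduces what that reference does. You also correctly identify that the only genuinely delicate point is the existence of the $h\to 0$ limit of $P_{\tau,\sigma}$, and your handling of it (pass to an orthonormalized basis depending smoothly on $h$, or delegate to the cited reference as the paper does) is appropriate.
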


  Because  $\tilde{u}(\tau)=u(\tau h)$,
\eqref{projection}  can be expressed
\begin{equation*}
 \begin{aligned}\label{odes}
u^{\prime}(\tau h)= A u(\tau h)+\langle P_{\tau,\sigma},g(u(\sigma
h))\rangle_{\sigma}.
\end{aligned}
\end{equation*}
 Applying the variation-of-constants formula \eqref{VOC} to
\eqref{projection},  we obtain
\begin{equation}\label{uu}
 \begin{aligned} &\tilde{u}(\tau)=u(\tau h)=e^{\tau h   A}y_{0}+\tau h    \int_{0}^1
e^{(1-\xi)\tau h   A}\langle P_{\xi
\tau,\sigma},g(u(\sigma h))\rangle_{\sigma}d\xi\\
=&e^{\tau h A}y_{0}+\tau h    \int_{0}^1 e^{(1-\xi)\tau h A}\langle
P_{\xi \tau,\sigma},g(\tilde{u}(\sigma))\rangle_{\sigma}d\xi.
\end{aligned}
\end{equation}
 Inserting \eqref{PCOEFF} into \eqref{uu} yields
\begin{equation*}\label{u meth}
 \begin{aligned} \tilde{u}(\tau)&=e^{\tau h A}y_{0}+\tau h  \int_{0}^1
e^{(1-\xi)\tau h A} \int_{0}^1 \sum_{i=0}^{r-1}\tilde{\psi}_{i}(\xi
\tau)\tilde{\psi}_{i}(\sigma)g(\tilde{u}(\sigma)) d\sigma d\xi\\
&=e^{\tau h A}y_{0}+\tau h  \int_{0}^1 \sum_{i=0}^{r-1}
 \int_{0}^1 e^{(1-\xi)\tau h A}\tilde{\psi}_{i}(\xi \tau)d\xi \tilde{\psi}_{i}(\sigma)g(\tilde{u}(\sigma))
d\sigma.
\end{aligned}
\end{equation*}
We are now  in a position  to present the scheme of the novel
methods.
\begin{defi}\label{def EEPCr}
  An  exponential   collocation method  for solving the
 system \eqref{IVPPP} or
\eqref{IVP1}  is defined  as follows:
\begin{equation}\label{EEPCr}
\begin{aligned} &\tilde{u}(\tau)=e^{\tau h A}y_{0}+\tau h
\int_{0}^1 \bar{A}_{\tau,\sigma}(A) g(\tilde{u}(\sigma))
d\sigma,\qquad y_{1}=\tilde{u}(1),
\end{aligned}
\end{equation}
where $h$ is  a  stepsize,
\begin{equation}\label{Aexplicit}
\bar{A}_{\tau,\sigma}(A)= \int_{0}^1 e^{(1-\xi)\tau h A}P_{\xi
\tau,\sigma}d\xi=\sum_{i=0}^{r-1}
 \int_{0}^1 e^{(1-\xi)\tau h A}\tilde{\psi}_{i}(\xi \tau)d\xi
\tilde{\psi}_{i}(\sigma),
\end{equation}
and $\left\{\tilde{\psi}_{0},\ldots,\tilde{\psi}_{r-1}\right\}$ is
 a standard orthonormal basis
 of $Y_{h}$. We denote the method as ECr.
\end{defi}

\begin{rem}Once the stepsize $h$ is chosen, the method
\eqref{EEPCr} approximates  the solution of \eqref{IVPPP} in the
time interval $I_0$. Obviously, the obtained result   can be
considered as the initial condition for a new initial value problem
and it can be approximated in the  next  time interval $I_1$. In
general, the method  can be extended to the approximation of the
solution in the interval $[0,T]$.
\end{rem}

\begin{rem}
It can be observed  that  the  ECr method  \eqref{EEPCr}
exactly integrates  the homogeneous linear system $y'=Ay$.  
The scheme \eqref{EEPCr}  can be classified into  the category of
exponential integrators (which can be thought of as continuous-stage
exponential integrators).   This  is an interesting and important
class of numerical methods for first-order ODEs (see, e.g.
\cite{Hochbruck1998,Hochbruck2005ANM,Hochbruck2005,Hochbruck2010,Hochbruck2009}).
   In
\cite{Li_Wu(sci2016)}, the authors researched  a new
energy-preserving exponential scheme for the conservative or
dissipative system.  Here we note that  its order is   of only two
since
 this  scheme combines  the ideas of DG and AVF methods. We have
proposed a kind of arbitrary-order exponential Fourier collocation
methods in \cite{wang2017-JCM}.  However,  those methods cannot
preserve energy exactly.   Fortunately,  we will show that the novel
ECr method \eqref{EEPCr}  can be of arbitrarily high order and can
exactly or  nearly preserve energy, which is different from the
existing exponential integrators in the literature. This feature is
significant  and makes our methods be more efficient and robust.
\end{rem}

\begin{rem}
Consider  $M=0$ and
 $Q=\left(\begin{array}{cc}O_{d_{1}\times
d_{1}}&-I_{d_{1}\times d_{1}}\\I_{d_{1}\times d_{1}}&O_{d_{1}\times
d_{1}}\end{array}\right),$ which means that \eqref{IVPPP} is a
Hamiltonian system.  In this special case,  if we   choose  $X_h$
and $Y_h$ as
\begin{equation*}
Y_h=\text{span}\left\{\tilde{\varphi}_{0}(\tau),\ldots,\tilde{\varphi}_{r-1}(\tau)\right\},\quad
X_h=\text{span}\left\{1,\int_{0}^{\tau}\tilde{\varphi}_{0}(s)ds,\ldots,\int_{0}^{\tau}
\tilde{\varphi}_{r-1}(s)ds\right\},
\end{equation*}
then the  ECr method \eqref{EEPCr} becomes the following
energy-preserving Runge-Kutta type collocation methods
\begin{equation*}\label{EEPCr-li}
\begin{aligned} &\tilde{u}(\tau)= y_{0}+\tau h
\int_{0}^1 \int_{0}^1  P_{\xi \tau,\sigma}d\xi g(\tilde{u}(\sigma))
d\sigma,\qquad y_{1}=\tilde{u}(1),
\end{aligned}
\end{equation*}
 which  yields  the functionally-fitted   TFE method
derived in \cite{Li_Wu(na2016)}.  Moreover,  under the above choices
of $M$ and $Q$, if $Y_h$ is particularly generated by the shifted
Legendre polynomials on $[0,1]$,
 then the
ECr method \eqref{EEPCr} reduces to  the  RKEPC
  method of order $2r$ given in
\cite{Hairer2010}  or   HBVM$(\infty,r)$ presented in
\cite{Brugnano2010}. Consequently, the ECr method \eqref{EEPCr} can
be regarded as a generalisation of these existing methods in the
literature.

\end{rem}

\section{Methods for second-order  highly oscillatory   ODEs } \label{second-system}

We consider the following second-order highly oscillatory problems
 \begin{equation}
q^{\prime\prime}(t)-N q^{\prime}(t)+\Omega q(t)=-\nabla U(q(t)),
\qquad
q(0)=q_0,\ \ q'(0)=q'_0,\qquad t\in[0,T],\label{2oprob}%
\end{equation}
where   $N$ is a symmetric negative semidefinite matrix, $\Omega$ is
a symmetric positive semidefinite matrix, and $U:
\mathbb{R}^{d}\rightarrow\mathbb{R}$ is a differential function. By
introducing $p=q^{\prime}$, \eqref{2oprob} can be transformed into
\begin{equation}\begin{aligned}& \left(
                   \begin{array}{c}
                     q \\
                      p \\
                   \end{array}
                 \right)
'=\left(
    \begin{array}{cc}
      0 & I \\
      -I &N \\
    \end{array}
  \right)\nabla H(q,p)
\label{2-1oprob}%
\end{aligned}\end{equation}
with
\begin{equation}\label{H2}H(q,p) =
\frac{1}{2}p^{\intercal}p+  \frac{1}{2} q^{\intercal}\Omega q+U
(q).\end{equation} This is exactly the same as  the problem
\eqref{IVPPP}.  Since $N$ is   symmetric negative semidefinite,
\eqref{2-1oprob} is a dissipative system with the
 Lyapunov function \eqref{H2}.
 In the particular case $N = 0$, \eqref{2-1oprob} becomes a
conservative Hamiltonian system with the first integral \eqref{H2}.
 This is an  important  highly oscillatory system which  has been investigated by many
researchers (see, e.g.
 \cite{Cohen-2005,franco2006,B.garcia1999,Hochbruck1999,Iserles 02,wang2017-ANM,wu2017-JCAM,wang2017-Cal,wu2010-1,wu2013-book}).

Applying the ECr method \eqref{EEPCr} to \eqref{2-1oprob} yields
{the trigonometric  collocation method   for second-order highly
oscillatory problems.  In particular, for Hamiltonian systems
\begin{equation}\label{os2oprob}q^{\prime\prime}(t)+\Omega q(t)=-\nabla U(q(t)),\end{equation}
the  case of $N=0$ in \eqref{2oprob}, the ECr method \eqref{EEPCr}
becomes  the following form.

\begin{defi}
 The
 trigonometric   collocation (denoted by TCr) method
 for \eqref{os2oprob} is defined as:
\begin{equation}\label{EEPCr of erkn}
\left\{\begin{aligned} & \tilde{q}(\tau)= \phi_{0} (K )q_0 +\tau
h\phi_{1} (K ) p_0 -\tau^2 h^2  \int_{0}^1 \mathcal{A}_{\tau,\sigma}(K) f(\tilde{q}(\sigma))d\sigma,\qquad   q_{1} =  \tilde{q}(1),\\
& \tilde{p}(\tau)= -\tau h\Omega\phi_{1}  (K)q_0 +\phi_{0} (K) p_0
-\tau h  \int_{0}^1 \mathcal{B}_{\tau,\sigma}(K)
f(\tilde{q}(\sigma))d\sigma,\ \ \quad p_{1}=\tilde{p}(1),
\end{aligned}\right.
\end{equation}
where $K=\tau^2 h^{2}\Omega,$ $f(q)=\nabla U(q),$  $
\phi_{i}(K):=\sum\limits_{l=0}^{\infty}\dfrac{(-1)^{l}K^{l}}{(2l+i)!}$
 for $
 i=0,1,\ldots,$ and
\begin{equation}\label{AB of erkn}
\begin{aligned} &\mathcal{A}_{\tau,\sigma}(K)= \sum_{i=0}^{r-1}
 \int_{0}^1(1-\xi) \phi_{1} \big((1-\xi)^2K\big)\tilde{\psi}_{i}(\xi
 \tau)d\xi \tilde{\psi}_{i}(\sigma),\\
& \mathcal{B}_{\tau,\sigma}(K)= \sum_{i=0}^{r-1}
 \int_{0}^1 \phi_{0} \big((1-\xi)^2K\big ) \tilde{\psi}_{i}(\xi \tau)d\xi
\tilde{\psi}_{i}(\sigma).
 \end{aligned}
\end{equation}
\end{defi}

\begin{rem}
In  \cite{wang-2016}, the authors developed and researched a novel
type of trigonometric Fourier collocation methods   for second-order
ODEs $q^{\prime\prime}(t)+Mq(t)=f(q(t))$.  However, as shown  in
\cite{wang-2016}, those methods cannot preserve the energy exactly.
From the analysis  to be presented  in this paper, it turns out
 that  the trigonometric collocation scheme  \eqref{EEPCr of erkn}
developed here  can attain arbitrary algebraic order and can exactly
or nearly preserve the energy of \eqref{H2}.
\end{rem}

\begin{rem}
It is remarked that the  multi-frequency highly oscillatory
Hamiltonian system  \eqref{h os H} is a kind of second-order system
$q^{\prime\prime}(t)+\bar{M}^{-1}\bar{K} q(t)=-\bar{M}^{-1}\nabla
U(q(t))$ and applying the ECr method \eqref{EEPCr} to it leads to
the TCr  method \eqref{EEPCr of erkn} with $K=\tau^2
h^{2}\bar{M}^{-1}\bar{K}$ and  $f(q)=\bar{M}^{-1}\nabla U(q).$
\end{rem}

  In the special case of $N=0$ and $\Omega=0,$  the
system \eqref{2oprob}  reduces to  the  conventional  second-order
ODEs
\begin{equation}
q^{\prime\prime}(t)=-\nabla U(q(t)), \qquad
q(0)=q_0,\ \ q'(0)=q'_0,\qquad t\in[0,T].\label{ge2oprob}%
\end{equation}
Then the TCr method has the  following form.
\begin{defi}
 A   TCr method  for solving \eqref{ge2oprob} is defined as
\begin{equation}\label{EEPCr of rkn}
\left\{\begin{aligned} & \tilde{q}(\tau)=  q_0 +\tau
h  p_0 -\tau^2 h^2  \int_{0}^1 \bar{\mathcal{A}}_{\tau,\sigma}  \nabla U(\tilde{q}(\sigma))d\sigma,\ \ \ q_{1} =  \tilde{q}(1),\\
& \tilde{p}(\tau)=   p_0
-\tau h  \int_{0}^1 \bar{\mathcal{B}}_{\tau,\sigma}  \nabla U(\tilde{q}(\sigma))d\sigma,\ \quad \qquad \ \ \ \quad p_{1}=\tilde{p}(1),\\
\end{aligned}\right.
\end{equation}
where
\begin{equation}\label{AB of rkn}
\begin{aligned} &\bar{\mathcal{A}}_{\tau,\sigma} = \sum_{i=0}^{r-1}
 \int_{0}^1(1-\xi)  \tilde{\psi}_{i}(\xi
 \tau)d\xi \tilde{\psi}_{i}(\sigma),\ \bar{\mathcal{B}}_{\tau,\sigma}= \sum_{i=0}^{r-1}
 \int_{0}^1   \tilde{\psi}_{i}(\xi \tau)d\xi
\tilde{\psi}_{i}(\sigma).
 \end{aligned}
\end{equation}
This scheme looks like  a continuous-stage  RKN method, and is
denoted by
   RKNCr in this paper.

\end{defi}

\section{Energy-preserving analysis}\label{preserving analysis}
In this section, we analyse the energy-preserving property of the
ECr methods.

\begin{theo}\label{EP}
If $Q$ is skew symmetric and   $\tilde{u}(\tau) \in X_h$, the first
integral $H$ \eqref{H} of the conservative system \eqref{IVPPP} can
be preserved exactly by the ECr method \eqref{EEPCr}: i.e.,
$H(y_{1})=H(y_{0}).$ If $\tilde{u}(\tau) \notin X_h$, the ECr method
\eqref{EEPCr} approximately preserves the   energy $H$ with the
following accuracy $H(y_{1})=H(y_{0})+\mathcal{O}(h^{2r+1}).$
\end{theo}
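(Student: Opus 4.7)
The plan is to write $H(y_1)-H(y_0)=\int_0^1\nabla H(\tilde u(\tau))^\intercal\tilde u'(\tau)\,d\tau$ and exploit two structural facts: the skew-symmetry $Q^\intercal=-Q$, giving $\nabla H(y)^\intercal Q\nabla H(y)\equiv 0$; and the defining orthogonality $\langle\tilde v,\mathcal{P}_h\tilde w\rangle=\langle\tilde v,\tilde w\rangle$ for any $\tilde v\in Y_h$, which is equivalent to $g(\tilde u)-\mathcal{P}_h g(\tilde u)\perp Y_h$.

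First I would differentiate \eqref{uu} via the chain rule $\tilde u'(\tau)=h u'(\tau h)$ to record that the method's continuous approximant satisfies $\tilde u'(\tau)=h\bigl(A\tilde u(\tau)+\mathcal{P}_h g(\tilde u(\tau))\bigr)$. Substituting this into the integrand and using $\nabla H^\intercal A\tilde u=\nabla H^\intercal QM\tilde u=-\nabla H^\intercal Q\nabla V=-\nabla H^\intercal g(\tilde u)$ (which follows from $\nabla H^\intercal Q\nabla H=0$ and $A=QM$, $g=Q\nabla V$), the integrand collapses to $h\,\nabla H(\tilde u)^\intercal\bigl(\mathcal{P}_h g(\tilde u)-g(\tilde u)\bigr)$. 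Using $g(\tilde u)-\mathcal{P}_h g(\tilde u)\perp Y_h$, the $\mathcal{P}_h\nabla H$ part contributes nothing, and I arrive at the master identity
\[
H(y_1)-H(y_0)=-h\,\bigl\langle\nabla H(\tilde u)-\mathcal{P}_h\nabla H(\tilde u),\;g(\tilde u)-\mathcal{P}_h g(\tilde u)\bigr\rangle.
\]

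For the exact case $\tilde u\in X_h$, I would use the stronger information $\tilde u'\in Y_h$ (by the defining property of $X_h$). Since $\mathcal{P}_h g(\tilde u)\in Y_h$ already, the ODE forces $A\tilde u\in Y_h$, hence $A\tilde u=\mathcal{P}_h(A\tilde u)$. This lets me rewrite $\tilde u'=h\mathcal{P}_h\bigl(A\tilde u+g(\tilde u)\bigr)=h\mathcal{P}_h(Q\nabla H(\tilde u))=hQ\mathcal{P}_h\nabla H(\tilde u)$, where the last step uses that the scalar kernel $P_{\tau,\sigma}$ makes $\mathcal{P}_h$ commute with the constant matrix $Q$. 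Splitting $\nabla H=\mathcal{P}_h\nabla H+(I-\mathcal{P}_h)\nabla H$ in $\int\nabla H^\intercal\tilde u'\,d\tau$, the first piece gives $h\int(\mathcal{P}_h\nabla H)^\intercal Q(\mathcal{P}_h\nabla H)\,d\tau=0$ by skew-symmetry, and the second vanishes because $(I-\mathcal{P}_h)\nabla H\perp Y_h\ni Q\mathcal{P}_h\nabla H$. Hence $H(y_1)=H(y_0)$.

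For the approximate case, Cauchy--Schwarz on the master identity yields $|H(y_1)-H(y_0)|\le h\,\|\nabla H(\tilde u)-\mathcal{P}_h\nabla H(\tilde u)\|\,\|g(\tilde u)-\mathcal{P}_h g(\tilde u)\|$, and the goal reduces to an $O(h^r)$ projection-error bound on each factor. I would exploit that $\tau\mapsto\nabla H(\tilde u(\tau))$ and $\tau\mapsto g(\tilde u(\tau))$ are smooth in $\tau\in[0,1]$ with $k$-th derivatives scaling as $O(h^k)$ (due to the embedded $h$ in $\tilde u(\tau)=u(\tau h)$), and that $Y_h$ is $r$-dimensional; a standard best-$L^2$-approximation / Bramble--Hilbert argument then produces the $O(h^r)$ estimate per factor, which combines with the prefactor $h$ to give the claimed $O(h^{2r+1})$. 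The algebraic identity in the first two paragraphs is the conceptual core and is essentially forced by the skew-symmetry and the orthogonality of $\mathcal{P}_h$; the main obstacle I expect is this final approximation-theoretic estimate, which depends on the concrete choice of orthonormal basis $\{\tilde\psi_i\}$ and on sufficient smoothness of $V$ and of the local approximant $\tilde u$.
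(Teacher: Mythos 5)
Your proof is correct and rests on the same two pillars as the paper's --- the skew-symmetry of $Q$ and the Galerkin orthogonality $\langle \tilde v,\tilde w-\mathcal{P}_h\tilde w\rangle=0$ for $\tilde v\in Y_h$ --- but it organizes them differently. The paper never forms your master identity: it keeps $Q^{-1}$ throughout, writing $\nabla H(\tilde u)=Q^{-1}\bigl(A\tilde u+g(\tilde u)\bigr)$, killing the quadratic term $\int_0^1 \tilde u'(\tau)^{\intercal}(Q^{-1})^{\intercal}\tilde u'(\tau)\,d\tau$ by skew-symmetry of $Q^{-1}$, and in the perturbative case pairing the defect of $\tilde u'$ (via $\tilde u'-\mathcal{P}_h\tilde u'=\mathcal{O}(h^{r})$) against the defect of $g\circ\tilde u$. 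Your version instead pairs the defect of $\nabla H\circ\tilde u$ against the defect of $g\circ\tilde u$ and dispenses with $Q^{-1}$ entirely through the pointwise identity $\nabla H^{\intercal}Q\nabla H=0$; this is slightly cleaner bookkeeping (and your exact-case computation $\tilde u'=hQ\,\mathcal{P}_h\nabla H(\tilde u)$ is just the paper's observation $Q^{-1}\tilde u'\in Y_h$ read in the other direction). The one place where you are thinner than the paper is the concluding $\mathcal{O}(h^{r})$ projection bound: a generic best-$L^2$-approximation argument does not suffice, since a fixed smooth function of $\tau$ is not approximated to order $h^{r}$ by the $r$-dimensional space $Y_h$; what is actually needed is that $\nabla H(\tilde u(\cdot))$ and $g(\tilde u(\cdot))$ are \emph{regular} in the paper's sense (the coefficient of $h^{n}$ in the expansion is a polynomial of degree $\le n$), which the paper obtains from the smooth $h$-dependence of $\tilde u$ (Theorem \ref{eus}), Lemma \ref{lemma1}, and Lemma 3.4 of the cited reference. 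You correctly flag this as the remaining obstacle, and it is exactly the ingredient the paper imports, so the gap is one of citation rather than of substance.
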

\begin{proof}
 We begin with the first part of this proof under the
assumption that $Q$ is skew symmetric and   $\tilde{u}(\tau) \in
X_h$. From $\tilde{u}(\tau)\in X_{h}$, it follows that
$\tilde{u}^{\prime}(\tau)\in Y_{h}$ and
$Q^{-1}\tilde{u}^{\prime}(\tau)\in Y_{h}$. Then,   in the light of
\eqref{DEF},  we obtain
\begin{equation*}
\begin{aligned}
\int_{0}^{1}\tilde{u}^{\prime}(\tau)^{\intercal}(Q^{-1})^{\intercal}\tilde{u}^{\prime}(\tau)d\tau
&=\int_{0}^{1}\tilde{u}^{\prime}(\tau)^{\intercal}(Q^{-1})^{\intercal}\big(A\tilde{u}(\tau)+\mathcal{P}_{h}g(\tilde{u}(\tau))\big)d\tau\\
&=\int_{0}^{1}\tilde{u}^{\prime}(\tau)^{\intercal}(Q^{-1})^{\intercal}\big(A\tilde{u}(\tau)+g(\tilde{u}(\tau))\big)d\tau.
\end{aligned}
\end{equation*}
 Here  $Q$ is skew symmetric,  so does $Q^{-1}$. We then have
$$0=\int_{0}^{1}\tilde{u}^{\prime}(\tau)^{\intercal}(Q^{-1})^{\intercal}\tilde{u}^{\prime}(\tau)d\tau
=-\int_{0}^{1}\tilde{u}^{\prime}(\tau)^{\intercal}Q^{-1}\big(A\tilde{u}(\tau)+g(\tilde{u}(\tau))\big)d\tau.$$
On the other hand,  it is clear that
\begin{equation*}
\begin{aligned}
&H(y_{1})-H(y_{0})=\int_{0}^{1}\frac{d}{d\tau}H(\tilde{u}(\tau))d\tau
=h\int_{0}^{1}\tilde{u}^{\prime}(\tau)^{\intercal}\nabla H(\tilde{u}(\tau))d\tau.\\
\end{aligned}
\end{equation*}
It follows from  \eqref{IVPPP} and \eqref{IVP1} that
$$\nabla H(\tilde{u}(\tau))=Q^{-1}\big(A\tilde{u}(\tau)+g(\tilde{u}(\tau))\big).$$
Therefore, we obtain
\begin{equation*}
\begin{aligned}
&H(y_{1})-H(y_{0})=h\int_{0}^{1}\tilde{u}^{\prime}(\tau)^{\intercal}Q^{-1}\big(A\tilde{u}(\tau)+g(\tilde{u}(\tau))\big)d\tau=h\cdot0=
0.\\
\end{aligned}
\end{equation*}

We next prove the second part of this theorem under the assumption
that $\tilde{u}(\tau) \notin X_h$. With the above analysis for the
first part of the proof, we have
\begin{equation*}
\begin{aligned}
&H(y_{1})-H(y_{0})=h\int_{0}^{1}\tilde{u}^{\prime}(\tau)^{\intercal}Q^{-1}\big(A\tilde{u}(\tau)+g(\tilde{u}(\tau))\big)d\tau\\
=&h\int_{0}^{1}\tilde{u}^{\prime}(\tau)^{\intercal}Q^{-1}\big(A\tilde{u}(\tau)+\mathcal{P}_{h}g(\tilde{u}(\tau))+g(\tilde{u}(\tau))-\mathcal{P}_{h}g(\tilde{u}(\tau))\big)d\tau\\
=&-h\int_{0}^{1}\tilde{u}^{\prime}(\tau)^{\intercal}(Q^{-1})^{\intercal}\tilde{u}^{\prime}(\tau)d\tau+
h\int_{0}^{1}\tilde{u}^{\prime}(\tau)^{\intercal}Q^{-1}\big(g(\tilde{u}(\tau))-\mathcal{P}_{h}g(\tilde{u}(\tau))\big)d\tau\\
=&
h\int_{0}^{1}\tilde{u}^{\prime}(\tau)^{\intercal}Q^{-1}\big(g(\tilde{u}(\tau))-\mathcal{P}_{h}g(\tilde{u}(\tau))\big)d\tau.
\end{aligned}
\end{equation*}
Concerning   Lemma 3.4 presented in \cite{Li_Wu(na2016)} and Lemma
\ref{lemma1}  proved in Section \ref{algebraic order}, one has
$\tilde{u}^{\prime}(\tau)=\mathcal{P}_{h}\tilde{u}^{\prime}(\tau)+\mathcal{O}(h^{r})$.
Therefore, one arrives at
\begin{equation*}
\begin{aligned}
H(y_{1})-H(y_{0}) =&
h\int_{0}^{1}\big(\mathcal{P}_{h}\tilde{u}^{\prime}(\tau)+\mathcal{O}(h^{r})\big)^{\intercal}Q^{-1}\big(g(\tilde{u}(\tau))-\mathcal{P}_{h}g(\tilde{u}(\tau))\big)d\tau\\
 =&
h\int_{0}^{1}\big(\mathcal{P}_{h}\tilde{u}^{\prime}(\tau)\big)^{\intercal}Q^{-1}\big(g(\tilde{u}(\tau))-\mathcal{P}_{h}g(\tilde{u}(\tau))\big)d\tau+\mathcal{O}(h^{2r+1})\\
 =&
h\int_{0}^{1}\big(\mathcal{P}_{h}\tilde{u}^{\prime}(\tau)\big)^{\intercal}Q^{-1}\big(g(\tilde{u}(\tau))-g(\tilde{u}(\tau))\big)d\tau+\mathcal{O}(h^{2r+1})=\mathcal{O}(h^{2r+1}),
\end{aligned}
\end{equation*}
where the result \eqref{error} is used.

  The proof is complete.
\end{proof}

\begin{theo}\label{DP}
If $Q$ is negative semidefinite and   $\tilde{u}(\tau) \in X_h$,
then the Lyapunov function $H$
 given by  \eqref{H} of the dissipative system \eqref{IVPPP} can be
preserved by the ECr method \eqref{EEPCr}; i.e., $H(y_{1})\leq
H(y_{0}).$ If $\tilde{u}(\tau) \notin X_h$,  it is true that
$H(y_{1})\leq H(y_{0})+\mathcal{O}(h^{2r+1}).$
\end{theo}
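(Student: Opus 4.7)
The plan is to mirror the structure of the proof of Theorem~\ref{EP}, replacing the skew-symmetry argument (which forced equality) by a semidefiniteness argument (which yields the inequality). The starting point is the same fundamental identity
\begin{equation*}
H(y_{1})-H(y_{0})=h\int_{0}^{1}\tilde{u}^{\prime}(\tau)^{\intercal}\nabla H(\tilde{u}(\tau))d\tau
=h\int_{0}^{1}\tilde{u}^{\prime}(\tau)^{\intercal}Q^{-1}\bigl(A\tilde{u}(\tau)+g(\tilde{u}(\tau))\bigr)d\tau,
\end{equation*}
obtained by the fundamental theorem of calculus together with $\nabla H=Q^{-1}(Ay+g(y))$. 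I would then use the scheme's defining relation $\tilde{u}^{\prime}=A\tilde{u}+\mathcal{P}_{h}g(\tilde{u})$ to split the integrand as
\begin{equation*}
A\tilde{u}+g(\tilde{u})=\tilde{u}^{\prime}+\bigl(g(\tilde{u})-\mathcal{P}_{h}g(\tilde{u})\bigr),
\end{equation*}
producing a ``quadratic'' piece $h\int_{0}^{1}\tilde{u}^{\prime\,\intercal}Q^{-1}\tilde{u}^{\prime}d\tau$ and a ``projection-defect'' piece involving $g(\tilde{u})-\mathcal{P}_{h}g(\tilde{u})$.

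For the first case ($\tilde{u}\in X_{h}$), the projection-defect piece vanishes exactly: since $\tilde{u}^{\prime}\in Y_{h}$ and $Y_{h}$ is invariant under multiplication by the constant matrix $(Q^{-1})^{\intercal}$, the function $(Q^{-1})^{\intercal}\tilde{u}^{\prime}$ lies in $Y_{h}$, and \eqref{DEF} yields $\langle(Q^{-1})^{\intercal}\tilde{u}^{\prime},g(\tilde{u})-\mathcal{P}_{h}g(\tilde{u})\rangle=0$. For the quadratic piece I need to argue that $Q^{-1}$ inherits negative semidefiniteness from $Q$: given any $x\in\mathbb{R}^d$, setting $x=Qy$ yields $x^{\intercal}Q^{-1}x=y^{\intercal}Q^{\intercal}y\le 0$, since $Q$ negative semidefinite implies $Q^{\intercal}$ is too. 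Hence $\tilde{u}^{\prime\,\intercal}Q^{-1}\tilde{u}^{\prime}\le 0$ pointwise, and $H(y_{1})-H(y_{0})\le 0$.

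For the second case ($\tilde{u}\notin X_{h}$), $\tilde{u}^{\prime}$ is no longer guaranteed to lie in $Y_{h}$, so the projection-defect piece need not vanish. I would apply the same device used at the end of the proof of Theorem~\ref{EP}: write $\tilde{u}^{\prime}=\mathcal{P}_{h}\tilde{u}^{\prime}+\mathcal{O}(h^{r})$ (the approximation result cited via Lemma~\ref{lemma1} and \eqref{error}), and note that the contribution from $\mathcal{P}_{h}\tilde{u}^{\prime}$ vanishes by the same orthogonality argument as above, while the $\mathcal{O}(h^{r})$ remainder pairs with the $\mathcal{O}(h^{r})$ projection defect of $g(\tilde{u})$ to give an $\mathcal{O}(h^{2r+1})$ contribution (the extra power of $h$ coming from the prefactor). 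The quadratic piece remains $\le 0$, so
\begin{equation*}
H(y_{1})-H(y_{0})\le h\int_{0}^{1}\tilde{u}^{\prime\,\intercal}Q^{-1}\tilde{u}^{\prime}d\tau+\mathcal{O}(h^{2r+1})\le \mathcal{O}(h^{2r+1}).
\end{equation*}

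The only nontrivial step is verifying that $Q^{-1}$ is negative semidefinite, but this is a short linear algebra manipulation. Everything else is a direct transcription of the conservative argument with ``$=0$'' replaced by ``$\le 0$'', so I do not anticipate a genuine obstacle beyond bookkeeping.
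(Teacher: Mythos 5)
Your proposal is correct and follows essentially the same route as the paper, which itself just asserts $\int_{0}^{1}\tilde{u}^{\prime}(\tau)^{\intercal}Q^{-1}\tilde{u}^{\prime}(\tau)\,d\tau\le 0$ and refers back to the argument of Theorem~\ref{EP}; your decomposition into the quadratic piece and the projection-defect piece is exactly that argument with the equality relaxed to an inequality. The one detail you supply that the paper leaves implicit --- that an invertible negative semidefinite $Q$ has negative semidefinite inverse, via $x=Qy$ --- is a correct and welcome addition rather than a different approach.
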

\begin{proof} According to the fact that $\int_{0}^{1}\tilde{u}^{\prime}(\tau)^{\intercal} Q^{-1}
\tilde{u}^{\prime}(\tau)d\tau \leq 0,$ this theorem can be proved in
a similar way  to   the proof of Theorem \ref{EP}.
  \end{proof}

\section{The existence, uniqueness
 and  smoothness}\label{existence}

In this section, we  focus on the study of  the existence and
uniqueness of $\tilde{u}(\tau)$ associated with the ECr method
\eqref{EEPCr}.


According to the Lemma 3.1 given in \cite{Hochbruck2005}, it is
easily  verified  that the coefficients $e^{\tau h A}$ and
$\bar{A}_{\tau,\sigma}(A)$  of our methods for $0\leq\tau\leq1$ and
$0\leq\sigma\leq1$   are  uniformly bounded.  We begin with assuming
that
\begin{equation*}
M_{k}=\max_{\tau,\sigma,h\in[0,1]}\norm{\frac{\partial^{k}\bar{A}_{\tau,\sigma}}{\partial
h^{k}}},\ \ C_{k}=\max_{\tau,h\in[0,1]}\norm{\frac{\partial^{k}
e^{\tau h A}}{\partial h^{k}}y_{0}}, \quad k=0,1,\ldots.
\end{equation*}
 Furthermore,   the $n$th-order
derivative of $g$ at $y$  is denoted  by $g^{(n)}(y).$
%
 We then  have the following result about the existence and
uniqueness of our methods.

\begin{theo}\label{eus}
  Let
$B(\bar{y}_{0},R)=\left\{y\in\mathbb{R}^{d} : ||y-\bar{y}_{0}||\leq
R\right\}$ and $$D_{n}=\max_{y\in B(\bar{y}_{0},R)}||g^{(n)}(y)||,\
n=0,1,\ldots,$$ where  $R$ is  a positive constant,
$\bar{y}_{0}=e^{\tau h A}y_{0}$, $||\cdot||=||\cdot||_{\infty}$ is
the maximum norm for vectors in $\mathbb{R}^{d}$ or the
corresponding induced norm for the multilinear maps $g^{(n)}(y)$. If
$h$ satisfies
\begin{equation}\label{cond2} 0\leq
h\leq
\kappa<\min\left\{\frac{1}{M_{0}D_{1}},\frac{R}{M_{0}D_{0}},1\right\},
\end{equation}
 then the ECr
method \eqref{EEPCr} has a unique solution $\tilde{u}(\tau)$ which
is smoothly dependent  on  $h$.
\end{theo}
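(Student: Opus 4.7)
The plan is a standard Banach fixed-point (Picard--Lindel\"of-type) argument adapted to the integral form of the ECr scheme, followed by an implicit-function-theorem argument for smoothness in $h$.

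First I would introduce the complete metric space
\[
\mathcal{X} = \bigl\{\tilde{u}\in C([0,1],\mathbb{R}^{d}) : \tilde{u}(\tau)\in B(\bar{y}_{0},R) \text{ for all } \tau\in[0,1]\bigr\},
\]
equipped with the sup-norm $\|\tilde{u}\|_{\infty} = \max_{\tau\in[0,1]}\|\tilde{u}(\tau)\|$ (since $B(\bar{y}_{0},R)$ depends on $\tau$ through $\bar{y}_{0}=e^{\tau h A}y_{0}$, one either freezes $\bar{y}_{0}$ at a reference value or equivalently works with $\tilde{u}(\tau)-e^{\tau h A}y_{0}$; I would choose the latter to keep the centre fixed). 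On $\mathcal{X}$ define the operator
\[
(\Phi_{h}\tilde{u})(\tau) = e^{\tau h A}y_{0} + \tau h\int_{0}^{1}\bar{A}_{\tau,\sigma}(A)\,g(\tilde{u}(\sigma))\,\mathrm{d}\sigma,
\]
so that solutions of \eqref{EEPCr} are exactly the fixed points of $\Phi_{h}$.

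Second, I would verify the self-mapping property. Using $\|\bar{A}_{\tau,\sigma}(A)\|\leq M_{0}$ uniformly on $[0,1]^{2}\times[0,1]$ and $\|g(y)\|\leq D_{0}$ on $B(\bar{y}_{0},R)$, one gets
\[
\|(\Phi_{h}\tilde{u})(\tau)-e^{\tau h A}y_{0}\| \leq h M_{0} D_{0} \leq \kappa M_{0}D_{0} \leq R,
\]
by the hypothesis \eqref{cond2}, so $\Phi_{h}(\mathcal{X})\subset\mathcal{X}$. Third, I would establish the contraction property: for $\tilde{u},\tilde{v}\in\mathcal{X}$, the mean value inequality together with $\|g'(y)\|\leq D_{1}$ on $B(\bar{y}_{0},R)$ gives
\[
\|(\Phi_{h}\tilde{u})(\tau)-(\Phi_{h}\tilde{v})(\tau)\| \leq h M_{0} D_{1}\|\tilde{u}-\tilde{v}\|_{\infty},
\]
and $h M_{0}D_{1}\leq \kappa M_{0}D_{1} < 1$ by \eqref{cond2}. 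The Banach fixed-point theorem then delivers a unique $\tilde{u}\in\mathcal{X}$ solving \eqref{EEPCr}.

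Finally, for the smooth dependence on $h$, I would apply the implicit function theorem in Banach spaces to the map $F:\mathcal{X}\times[0,\kappa]\to C([0,1],\mathbb{R}^{d})$, $F(\tilde{u},h)=\tilde{u}-\Phi_{h}\tilde{u}$. The Fr\'echet derivative $D_{\tilde{u}}F(\tilde{u},h) = I-D_{\tilde{u}}\Phi_{h}(\tilde{u})$ has operator norm bound $hM_{0}D_{1}<1$ for the second summand, so $D_{\tilde{u}}F$ is invertible by a Neumann series. Since the coefficients $e^{\tau h A}$ and $\bar{A}_{\tau,\sigma}(A)$ are smooth in $h$ (entire functions of $hA$) and $g$ is assumed smooth, $F$ is smooth jointly in $(\tilde{u},h)$, hence the fixed point $\tilde{u}=\tilde{u}(\cdot;h)$ inherits smoothness in $h$.

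The main obstacle, though mild, is the bookkeeping of the norms: one must keep track that $M_{0}$ and the Lipschitz bounds are uniform in $\tau,\sigma,h\in[0,1]$ (this uses Lemma~3.1 of \cite{Hochbruck2005} as the excerpt indicates), and that the ball $B(\bar{y}_{0},R)$ with a $\tau$-dependent centre is handled cleanly---most neatly by recasting the problem with the substitution $\tilde{w}(\tau)=\tilde{u}(\tau)-e^{\tau h A}y_{0}$ so that the domain becomes the fixed ball $B(0,R)$ and the three conditions in \eqref{cond2} collapse to the ones above.
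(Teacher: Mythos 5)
Your proposal is correct, and its existence--uniqueness half is essentially the paper's argument in different clothing: the paper runs the Picard iteration $\tilde{u}_{n+1}=\Phi_h\tilde{u}_n$ explicitly, proves $\|\tilde{u}_{n+1}-\tilde{u}_n\|_c\leq\beta^nR$ with $\beta=\kappa M_0D_1<1$, and sums the telescoping series by the Weierstrass $M$-test, which is exactly the proof of the Banach fixed-point theorem you invoke; the self-mapping bound $\kappa M_0D_0\leq R$ and the contraction constant are identical. Where you genuinely diverge is the smoothness in $h$: the paper differentiates the recursion \eqref{recur} with respect to $h$, shows by induction that $\{\partial\tilde{u}_n/\partial h\}$ is uniformly bounded by $C^{*}=\alpha/(1-\beta)$ and that $\|\partial\tilde{u}_{n+1}/\partial h-\partial\tilde{u}_n/\partial h\|_c\leq n\gamma\beta^{n-1}+\beta^nC^{*}$, hence uniformly convergent, and then asserts the same "likewise" for all $k\geq2$; you instead apply the implicit function theorem to $F(\tilde{u},h)=\tilde{u}-\Phi_h\tilde{u}$, using the Neumann series to invert $I-D_{\tilde{u}}\Phi_h$. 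Your route is shorter and gets all orders of smoothness in one stroke, avoiding the increasingly heavy induction that the paper only sketches for $k\geq2$; the price is the Banach-space IFT machinery and some care at the boundary points $h=0$ and $h=\kappa$ of the parameter interval and with the closed ball $\mathcal{X}$ (one should extend to a slightly larger open set, which the uniform bounds permit). The paper's elementary route, by contrast, yields explicit quantitative bounds on the $h$-derivatives of the iterates, which are reused later (e.g.\ the constant $C^{*}$ feeds into $\gamma$). Your recentring substitution $\tilde{w}(\tau)=\tilde{u}(\tau)-e^{\tau hA}y_0$ is a genuine improvement in bookkeeping: the paper's ball $B(\bar{y}_0,R)$ has a $\tau$- and $h$-dependent centre, which it silently treats as fixed.
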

\begin{proof}
Set $\tilde{u}_{0}(\tau)=\bar{y}_{0}$ and define
\begin{equation}\label{recur}
\tilde{u}_{n+1}(\tau)= e^{\tau h A}y_{0}+\tau h  \int_{0}^1
\bar{A}_{\tau,\sigma}(A) g(\tilde{u}_n(\sigma)) d\sigma, \quad
n=0,1,\ldots,
\end{equation}
which leads to a function series
$\{\tilde{u}_{n}(\tau)\}_{n=0}^{\infty}.$  We note that
$\lim\limits_{n\to\infty}\tilde{u}_{n}(\tau)$ is a solution  of the
TCr method \eqref{EEPCr}   if
$\left\{\tilde{u}_{n}(\tau)\right\}_{n=0}^{\infty}$ is uniformly
convergent, which will be shown by proving the uniform convergence
of the infinite series
$\sum_{n=0}^{\infty}(\tilde{u}_{n+1}(\tau)-\tilde{u}_{n}(\tau)).$

By induction and according to     \eqref{cond2} and \eqref{recur},
we get $ ||\tilde{u}_{n}(\tau)-\bar{y}_{0}||\leq R$ for
$n=0,1,\ldots.$ It then follows from \eqref{recur} that
\begin{equation*}
\begin{aligned}
&||\tilde{u}_{n+1}(\tau)-\tilde{u}_{n}(\tau)||\leq \tau h\int_{0}^{1}M_{0}D_{1}||\tilde{u}_{n}(\sigma)-\tilde{u}_{n-1}(\sigma)||d\sigma\\
& \leq  h\int_{0}^{1}M_{0}D_{1}||\tilde{u}_{n}(\sigma)-\tilde{u}_{n-1}(\sigma)||d\sigma \leq\beta||\tilde{u}_{n}-\tilde{u}_{n-1}||_{c},\quad{\beta=\kappa M_{0}D_{1},}\\
\end{aligned}
\end{equation*}
where $||\cdot||_{c}$ is the maximum norm for continuous functions
defined as $ ||w||_{c}=\max_{\tau\in[0,1]}||w(\tau)||$ for
  a continuous $\mathbb{R}^{d}$-valued function $w$ on
$[0,1]$.  Hence, we obtain
\begin{equation*}
||\tilde{u}_{n+1}-\tilde{u}_{n}||_{c}\leq\beta||\tilde{u}_{n}-\tilde{u}_{n-1}||_{c}
\end{equation*}
and
\begin{equation*}\label{recur3}
||\tilde{u}_{n+1}-\tilde{u}_{n}||_{c}\leq\beta^{n}||\tilde{u}_{1}-y_{0}||_{c}\leq\beta^{n}R,\quad
n=0,1,\ldots.
\end{equation*}
 From Weierstrass $M$-test and the fact that $\beta<1,$ it
immediately follows  that
$\sum_{n=0}^{\infty}(\tilde{u}_{n+1}(\tau)-\tilde{u}_{n}(\tau))$ is
uniformly convergent

If the ECr method \eqref{EEPCr}  has another solution
$\tilde{v}(\tau)$,  we then  obtain the following inequalities
 \begin{equation*}
||\tilde{u}(\tau)-\tilde{v}(\tau)||\leq
h\int_{0}^{1}||\bar{A}_{\tau,\sigma}(A)\big(g(\tilde{u}(\sigma))-g(\tilde{v}(\sigma))\big)||d\sigma\leq\beta||\tilde{u}-\tilde{v}||_{c},
\end{equation*}
and $
\norm{\tilde{u}-\tilde{v}}_{c}\leq\beta||\tilde{u}-\tilde{v}||_{c}.
$ This yields  $||\tilde{u}-\tilde{v}||_{c}=0$ and
$\tilde{u}(\tau)\equiv \tilde{v}(\tau)$. The existence and
uniqueness have been proved.

 With respect to  the result that  $\tilde{u}(\tau)$  is smoothly
dependent of $h$,  since each $\tilde{u}_{n}(\tau)$ is a
 smooth  function of $h$, we need only to prove that the series
$\left\{\frac{\partial^{k}\tilde{u}_{n}}{\partial
h^{k}}(\tau)\right\}_{n=0}^{\infty}$  is  uniformly convergent for
$k\geq1$. Differentiating  \eqref{recur} with respect to $h$ gives
\begin{equation}\label{recur2}
\begin{aligned}
\frac{\partial \tilde{u}_{n+1}}{\partial h}(\tau)=&\tau   Ae^{\tau h
A}y_0+\tau \int_{0}^{1}\Big(\bar{A}_{\tau,\sigma}(A)+h\frac{\partial
\bar{A}_{\tau,\sigma}}{\partial
h}\Big)g(\tilde{u}_{n}(\sigma))d\sigma \\
&+\tau
h\int_{0}^{1}\bar{A}_{\tau,\sigma}(A)g^{(1)}(\tilde{u}_{n}(\sigma))\frac{\partial
\tilde{u}_{n}}{\partial h}(\sigma)d\sigma, \end{aligned}
\end{equation}
which yields that
\begin{equation*}
\norm{\frac{\partial \tilde{u}_{n+1}}{\partial
h}}_{c}\leq\alpha+\beta\norm{\frac{\partial \tilde{u}_{n}}{\partial
h}}_{c},\quad \alpha=C_1+(M_{0}+\kappa M_{1})D_{0}.
\end{equation*}
By induction, it is easy to show that $\left\{\frac{\partial
\tilde{u}_{n}}{\partial h}(\tau)\right\}_{n=0}^{\infty}$ is
uniformly bounded:
\begin{equation}\label{bound}
\norm{\frac{\partial \tilde{u}_{n}}{\partial
h}}_{c}\leq\alpha(1+\beta+\ldots+\beta^{n-1})\leq\frac{\alpha}{1-\beta}=C^{*},\quad
n=0,1,\ldots.
\end{equation}
 It follows from \eqref{recur2}--\eqref{bound} that
\begin{equation*}
\begin{aligned}
&\norm{\frac{\partial \tilde{u}_{n+1}}{\partial h}-\frac{\partial
\tilde{u}_{n}}{\partial h}}_{c} \leq
 \tau \int_{0}^{1}(M_{0}+hM_{1})\norm{g(\tilde{u}_{n}(\sigma))-g(\tilde{u}_{n-1}(\sigma))}d\sigma\\
&+\tau
h\int_{0}^{1}M_{0}\Big(\norm{\big(g^{(1)}(\tilde{u}_{n}(\sigma))-g^{(1)}(\tilde{u}_{n-1}(\sigma))\big)\frac{\partial
\tilde{u}_{n}}{\partial h}(\sigma)}\\
&+\norm{g^{(1)}(\tilde{u}_{n-1}(\sigma))\Big(\frac{\partial
\tilde{u}_{n}}{\partial h}(\sigma)- \frac{\partial
\tilde{u}_{n-1}}{\partial
h}(\sigma)\Big)}\Big)d\sigma\leq\gamma\beta^{n-1}+
\beta\norm{\frac{\partial \tilde{u}_{n}}{\partial h}-\frac{\partial \tilde{u}_{n-1}}{\partial h}}_{c},\\
\end{aligned}
\end{equation*}
where $\gamma=(M_{0}D_{1}+\kappa M_{1}D_{1}+\kappa
M_{0}L_{2}C^{*})R,$ and $L_{2}$ is a constant satisfying
\begin{equation*}
||g^{(1)}(y)-g^{(1)}(z)||\leq L_{2}||y-z||,\quad\text{for\ \ $y,z\in
B(\bar{y}_{0},R)$}.
\end{equation*}
Therefore, the following result is obtained   by induction
\begin{equation*}
\norm{\frac{\partial \tilde{u}_{n+1}}{\partial h}-\frac{\partial
\tilde{u}_{n}}{\partial h}}_{c}\leq
n\gamma\beta^{n-1}+\beta^{n}C^{*},\quad n=1,2,\ldots.
\end{equation*}
 This shows  the   uniform convergence of
 $\sum_{n=0}^{\infty}(\frac{\partial \tilde{u}_{n+1}}{\partial
h}(\tau)-\frac{\partial \tilde{u}_{n}}{\partial h}(\tau))$ and then
 $\left\{\frac{\partial \tilde{u}_{n}}{\partial
h}(\tau)\right\}_{n=0}^{\infty}$ is uniformly convergent.

 Likewise,   it can be shown  that other function series
$\left\{\frac{\partial^{k}\tilde{u}_{n}}{\partial
h^{k}}(\tau)\right\}_{n=0}^{\infty}$ for $k\geq2$ are uniformly
convergent as well. Therefore, $\tilde{u}(\tau)$ is  smoothly
dependent  on $h$.
\end{proof}

\section{Algebraic order}\label{algebraic order}
In this section, we analyse the algebraic order of the ECr method
\eqref{EEPCr}.
 To express the dependence of the
solutions of $y'(t)=Ay(t)+g(y(t))$ on the initial values, we denote
by $y(\cdot,\tilde{t}, \tilde{y})$ the solution satisfying the
initial condition $y(\tilde{t},\tilde{t}, \tilde{y})=\tilde{y}$ for
any given $\tilde{t}\in[0,h]$ and set $\Phi(s,\tilde{t},
\tilde{y})=\frac{\partial y(s,\tilde{t}, \tilde{y})}{\partial
\tilde{y}}. $ Recalling the elementary theory of ODEs, we have the
following standard result
\begin{equation*}
\frac{\partial y(s,\tilde{t}, \tilde{y})}{\partial
\tilde{t}}=-\Phi(s,\tilde{t}, \tilde{y})\big(A\tilde{
y}+g(\tilde{y})\big).
\end{equation*}


In this section, for convenience,  an $h$-dependent function
$w(\tau)$ is called as regular if it can be expanded as $
w(\tau)=\sum_{n=0}^{r-1}w^{[n]}(\tau)h^{n}+\mathcal{O}(h^{r}), $
where $w^{[n]}(\tau)=\frac{1}{n!}\frac{\partial^{n}w(\tau)}{\partial
h^{n}}|_{h=0}$ is a vector-valued function with polynomial entries
of degrees $\leq n$.

It  can be deduced from   Proposition 3.3 in \cite{Li_Wu(na2016)}
that $P_{\tau,\sigma}$ is regular. Moreover, we can prove   the
following result.

\begin{lem}\label{lemma1}
The ECr method \eqref{EEPCr} gives a regular $h$-dependent
 function $\tilde{u}(\tau)$.
\end{lem}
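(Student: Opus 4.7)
The plan is to combine the smooth $h$-dependence of $\tilde u(\tau)$ (already granted by Theorem~\ref{eus}) with an induction on the Taylor coefficient order $n$. Since Theorem~\ref{eus} guarantees $\tilde u(\tau)$ is smooth in $h$, we may write $\tilde u(\tau)=\sum_{n=0}^{r-1}u^{[n]}(\tau)h^n+\mathcal O(h^r)$ with $u^{[n]}(\tau)=\frac1{n!}\partial_h^n\tilde u(\tau)\big|_{h=0}$; the whole task reduces to showing that each $u^{[n]}(\tau)$ is a polynomial in $\tau$ of degree $\le n$. The inductive step will compare the $h^n$-coefficient on both sides of the fixed-point relation \eqref{EEPCr}.

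First I would assemble two preliminary regularity facts. Expanding the matrix exponential term by term gives $e^{\tau hA}y_0=\sum_{k\ge0}(\tau^kA^ky_0/k!)h^k$, whose $h^n$-coefficient $\tau^nA^ny_0/n!$ is a polynomial in $\tau$ of degree exactly $n$. For $\bar A_{\tau,\sigma}(A)=\int_0^1 e^{(1-\xi)\tau hA}P_{\xi\tau,\sigma}\,d\xi$, I would combine the same exponential expansion with the regularity of $P_{\tau,\sigma}$ (Proposition~3.3 in \cite{Li_Wu(na2016)}), obtaining a Cauchy product whose $h^n$-coefficient is $\sum_{m+j=n}\frac{1}{m!}\int_0^1[(1-\xi)\tau]^mA^mP^{[j]}_{\xi\tau,\sigma}\,d\xi$. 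Since $P^{[j]}_{\cdot,\cdot}$ is polynomial of total degree $\le j$, this is a polynomial of degree $\le n$ in $\tau$ (and $\le n$ in $\sigma$ as well).

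Next I would run the induction. The base case $n=0$ is immediate: setting $h=0$ in \eqref{EEPCr} gives $u^{[0]}(\tau)=y_0$, a polynomial of degree $0$. For the step, assume $u^{[k]}(\tau)$ is polynomial of degree $\le k$ for every $k\le n-1$. The $h^n$-coefficient of the right-hand side of \eqref{EEPCr} decomposes as follows: the contribution from $e^{\tau hA}y_0$ is $\tau^nA^ny_0/n!$, polynomial of degree $n$; the contribution from $\tau h\int_0^1\bar A_{\tau,\sigma}(A)g(\tilde u(\sigma))\,d\sigma$ equals $\tau$ times the $h^{n-1}$-coefficient of $\int_0^1\bar A_{\tau,\sigma}(A)g(\tilde u(\sigma))\,d\sigma$. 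By the preliminary fact together with the Cauchy product, this integrand's $h^{n-1}$-coefficient is a polynomial in $(\tau,\sigma)$ of total degree $\le n-1$; integrating over $\sigma\in[0,1]$ removes $\sigma$ and leaves a polynomial of degree $\le n-1$ in $\tau$, which after multiplication by $\tau$ has degree $\le n$. Adding both contributions yields $u^{[n]}(\tau)$ of degree $\le n$, closing the induction.

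The main obstacle is justifying that $g(\tilde u(\sigma))$ is itself regular with the correct degree bounds, which is what makes the Cauchy-product step of the induction work. I would handle this via Fa\`a di Bruno: the $h^k$-coefficient of $g(\tilde u(\sigma))$ (for $k\le n-1$) is a finite sum of terms of the form $\tfrac{1}{j!}g^{(j)}(y_0)\bigl(u^{[k_1]}(\sigma),\ldots,u^{[k_j]}(\sigma)\bigr)$ with $k_1+\cdots+k_j=k$ and each $k_i\le k\le n-1$; by the inductive hypothesis every $u^{[k_i]}(\sigma)$ is polynomial of degree $\le k_i$, so each such term is polynomial of degree $\le k$ in $\sigma$. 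Once this composition lemma is in place, the polynomial-degree bookkeeping through products and the $\sigma$-integration is routine, and the lemma follows.
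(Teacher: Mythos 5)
Your proposal is correct and follows essentially the same route as the paper: both rely on the smoothness in $h$ from Theorem~\ref{eus} and the regularity of $P_{\tau,\sigma}$, expand $e^{\tau hA}$, $\bar A_{\tau,\sigma}(A)$ and $g(\tilde u(\sigma))$ in powers of $h$, and induct on the coefficient index, with the prefactor $\tau h$ supplying the one-degree increment (the paper realizes this via $\tau\int_0^1\check p(\xi\tau)\,d\xi=\int_0^\tau\check p(\alpha)\,d\alpha$, you by multiplying the degree-$\le n-1$ coefficient by $\tau$, which amounts to the same bookkeeping). Your explicit Cauchy-product and Fa\`a di Bruno steps are just a more detailed rendering of the paper's Taylor expansion of $g$ about $y_0$ in powers of $\delta=\tilde u(\sigma)-y_0=\mathcal O(h)$.
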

\begin{proof}
By the result given in \cite{Li_Wu(na2016)}, we know that
$P_{\tau,\sigma}$ can be smoothly extended to $h=0$ by setting
$P_{\tau,\sigma}|_{h=0}=\lim\limits_{h\to0}P_{\tau,\sigma}(h)$.
Furthermore, it follows from   Theorem \ref{eus} that
$\tilde{u}(\tau)$ is  smoothly dependent on $h$. Therefore,
$\tilde{u}(\tau)$ and $\bar{A}_{\tau,\sigma}(A)$ can be expanded
with respect to $h$ at zero as follows:
\begin{equation*}
\tilde{u}(\tau)=\sum_{m=0}^{r-1}\tilde{u}^{[m]}(\tau)h^{m}+\mathcal{O}(h^{r}),\quad
\bar{A}_{\tau,\sigma}(A)=\sum_{m=0}^{r-1}\bar{A}_{\tau,\sigma}^{[m]}(A)h^{m}+\mathcal{O}(h^{r}).
\end{equation*}
Then let $\delta=\tilde{u}(\sigma)-y_{0}$ and we have
\begin{equation*}
\delta=
\tilde{u}^{[0]}(\sigma)-y_{0}+\mathcal{O}(h)=y_{0}-y_{0}+\mathcal{O}(h)=\mathcal{O}(h).
\end{equation*}
We expand  $f(\tilde{u}(\sigma))$ at $y_{0}$ and insert  the above
equalities into the first equation of the ECr method \eqref{EEPCr}.
This manipulation yields
\begin{equation}\label{comp}
\begin{aligned}
&\sum_{m=0}^{r-1}\tilde{u}^{[m]}(\tau)h^{m}=\sum_{m=0}^{r-1}
\frac{\tau^m A^my_{0}}{m!} h^{m}\\
&+\tau
h\int_{0}^{1}\sum_{k=0}^{r-1}\bar{A}_{\tau,\sigma}^{[k]}(A)h^{k}\sum_{n=0}^{r-1}\frac{1}{n!}g^{(n)}(y_{0})
 (\underbrace{\delta,\ldots,\delta}_{n-fold})d\sigma+\mathcal{O}(h^{r}).
\end{aligned}\end{equation}
 In order to show that $\tilde{u}(\tau)$ is regular, we need only to
prove that
$$\tilde{u}^{[m]}(\tau)\in
P_{m}^{d}=\underbrace{P_{m}([0,1])\times\ldots\times
P_{m}([0,1])}_{d-fold}\ \ \ \textmd{for}\  \ \ m=0,1,\ldots,r-1,$$
 where  $P_{m}([0,1])$ consists of polynomials of degrees $\leq m$ on
$[0,1]$. This can be confirmed by induction as follows.

Firstly, it is clear that $\tilde{u}^{[0]}(\tau)=y_{0}\in
P_{0}^{d}$. We assume that $\tilde{u}^{[n]}(\tau)\in P_{n}^{d}$ for
$n=0,1,\ldots,m$. Comparing the coefficients of $h^{m+1}$ on both
sides of \eqref{comp} and using \eqref{Aexplicit}  lead  to
\begin{equation*}
\begin{aligned}
&\tilde{u}^{[m+1]}(\tau)=\frac{\tau^{m+1} A^{m+1} }{(m+1)!} y_{0}+
\sum_{k+n=m}\tau\int_{0}^{1}\bar{A}_{\tau,\sigma}^{[k]}(A)h_{n}(\sigma)d\sigma\\
&=\frac{\tau^{m+1} A^{m+1} }{(m+1)!} y_{0}+
\sum_{k+n=m}\tau\int_{0}^{1}\int_{0}^{1}\Big[e^{(1-\xi)\tau h
A}P_{\xi \tau,\sigma}\Big]^{[k]} h_{n}(\sigma)d\sigma d\xi,\quad
h_{n}(\sigma)\in P_{n}^{d}.
\end{aligned}
\end{equation*}
Since $P_{\xi \tau,\sigma}$ is regular, it is easy to check that
$e^{(1-\xi)\tau h A}P_{\xi \tau,\sigma}$ is also regular. Thus,
under the condition $k+n=m$,  we have
$$\int_{0}^{1}\Big[e^{(1-\xi)\tau h
A}P_{\xi \tau,\sigma}\Big]^{[k]}
h_{n}(\sigma)d\sigma:=\check{p}^{k}_{m}(\xi \tau) \in
P^d_{m}([0,1]).$$ Then,
 the
above result can be simplified as
\begin{equation*}
\begin{aligned}
\tilde{u}^{[m+1]}(\tau) &=\frac{\tau^{m+1} A^{m+1} }{(m+1)!}
y_{0}+\sum_{k+n=m}\tau\int_{0}^{1}\check{p}^{k}_{m}(\xi \tau) d\xi \\
&=\frac{\tau^{m+1} A^{m+1} }{(m+1)!}
y_{0}+\sum_{k+n=m}\int_{0}^{\tau}\check{p}^{k}_{m}(\alpha)d\alpha\in P_{m+1}^{d}.\\
\end{aligned}
\end{equation*}
\end{proof}

  According to Lemma 3.4 presented in \cite{Li_Wu(na2016)} and
the above lemma, we get
\begin{equation}\label{error}
\mathcal{P}_{h}g(\tilde{u}(\tau))-g(\tilde{u}(\tau))=\mathcal{O}(h^{r}),
\end{equation}
which will be used in the analysis of algebraic order.
%
We are now ready to present the result about the algebraic order of
the ECr method \eqref{EEPCr}.
\begin{theo}\label{order}
About the stage order and order of the ECr method \eqref{EEPCr}, we
have
\begin{equation*} \begin{aligned}&\tilde{u}(\tau)-y(t_{0}+\tau
h)=\mathcal{O}(h^{r+1}),\ \  0<\tau<1,\\
&\tilde{u}(1)-y(t_{0}+h)=\mathcal{O}(h^{2r+1}).\end{aligned}
\end{equation*}
\end{theo}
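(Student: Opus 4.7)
The plan is to handle the two estimates separately: establish the stage-order bound by a direct fixed-point comparison based on the variation-of-constants formula, and then upgrade to the superconvergence at $\tau=1$ by a defect-type identity that exploits the $L^{2}$-orthogonality of $\mathcal{P}_{h}$. Both steps rest on the estimate \eqref{error} and on the regularity of $\tilde{u}$ furnished by Lemma \ref{lemma1}.

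For the stage-order bound, I would apply the variation-of-constants formula \eqref{VOC} to the exact solution on $[t_{0},t_{0}+\tau h]$ and subtract it from \eqref{EEPCr}, obtaining
\begin{equation*}
\tilde{u}(\tau)-y(t_{0}+\tau h)=\tau h\int_{0}^{1}e^{(1-\xi)\tau hA}\Big(\mathcal{P}_{h}g(\tilde{u})(\xi\tau)-g(y(t_{0}+\xi\tau h))\Big)d\xi.
\end{equation*}
Splitting the bracket as $\mathcal{P}_{h}g(\tilde{u})-g(\tilde{u})$ plus $g(\tilde{u})-g(y)$, the first piece is $\mathcal{O}(h^{r})$ by \eqref{error}, while the second is controlled by the local Lipschitz constant of $g$ times $\|\tilde{u}-y\|$. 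Together with the uniform bound on $e^{(1-\xi)\tau hA}$ (as used in Section \ref{existence}) and a standard Gronwall argument in $\tau$, this yields $\tilde{u}(\tau)-y(t_{0}+\tau h)=\mathcal{O}(h^{r+1})$ uniformly on $[0,1]$.

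For the endpoint superconvergence, I would introduce the curve $Y(\tau):=y(t_{0}+h,t_{0}+\tau h,\tilde{u}(\tau))$, which satisfies $Y(0)=y(t_{0}+h)$ and $Y(1)=\tilde{u}(1)=y_{1}$. Differentiating in $\tau$ and using the identity $\partial_{\tilde{t}}y(s,\tilde{t},\tilde{y})=-\Phi(s,\tilde{t},\tilde{y})(A\tilde{y}+g(\tilde{y}))$ recalled at the start of this section, the contribution of $A\tilde{u}+g(\tilde{u})$ cancels against that produced by $\tilde{u}'(\tau)=h(A\tilde{u}(\tau)+\mathcal{P}_{h}g(\tilde{u}(\tau)))$, leaving only the defect. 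Integrating over $\tau\in[0,1]$ gives
\begin{equation*}
y_{1}-y(t_{0}+h)=h\int_{0}^{1}\Phi(t_{0}+h,t_{0}+\tau h,\tilde{u}(\tau))\Big(\mathcal{P}_{h}g(\tilde{u}(\tau))-g(\tilde{u}(\tau))\Big)d\tau.
\end{equation*}

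The crucial step is then an orthogonality trick: because $\mathcal{P}_{h}$ acts componentwise as the self-adjoint $\langle\cdot,\cdot\rangle$-orthogonal projector onto $Y_{h}$, each component of $\mathcal{P}_{h}g(\tilde{u})-g(\tilde{u})$ annihilates every component of any $Y_{h}$-valued map, so I may replace the matrix $\Phi$ in the above integrand by $\Phi-\mathcal{P}_{h}\Phi$ without changing the integral. The right-hand factor remains $\mathcal{O}(h^{r})$ by \eqref{error}, and the new left-hand factor is also $\mathcal{O}(h^{r})$ by the same projection-approximation property, provided $\tau\mapsto\Phi(t_{0}+h,t_{0}+\tau h,\tilde{u}(\tau))$ is smooth with derivative bounds independent of $h$. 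Cauchy--Schwarz in $\tau$ then delivers $\mathcal{O}(h^{2r})$ inside the integral and, with the prefactor $h$, the desired $\mathcal{O}(h^{2r+1})$. The main obstacle is exactly this last smoothness claim: one must differentiate the flow $\Phi$ in all three arguments and combine those derivatives with the $h$-regularity of $\tilde{u}$ guaranteed by Lemma \ref{lemma1}, to obtain $h$-uniform bounds on the higher $\tau$-derivatives of $\Phi(t_{0}+h,t_{0}+\tau h,\tilde{u}(\tau))$; once this is in place, the orthogonality argument closes the proof.
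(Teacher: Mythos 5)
Your proposal is correct and, for the key superconvergence step, follows essentially the same route as the paper: the defect identity obtained by transporting $\tilde{u}(\tau)$ along the exact flow, followed by the orthogonality of $\mathcal{P}_{h}$ applied to the rows of $\Phi$ and the $\mathcal{O}(h^{r})$ projection error on both factors (the smoothness/regularity of $\tau\mapsto\Phi$ that you flag is exactly what the paper invokes via Lemma \ref{lemma1}). The only immaterial difference is in the stage-order bound, where you use the variation-of-constants formula plus a Gronwall/contraction argument, while the paper reads off $\mathcal{O}(h^{r+1})$ directly from the same flow-map error representation evaluated at a general $\tau$.
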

\begin{proof} According to the previous preliminaries,
 we obtain
\begin{equation}\label{stage}
\begin{aligned}
&\tilde{u}(\tau)-y(t_{0}+\tau h)=y(t_{0}+\tau h,t_{0}+\tau h,\tilde{u}(\tau))-y(t_{0}+\tau h,t_{0},y_{0})\\
&=\int_{0}^{\tau}\frac{d}{d\alpha}y(t_{0}+\tau h,t_{0}+\alpha h,\tilde{u}(\alpha))d\alpha\\
&=\int_{0}^{\tau}(h\frac{\partial y}{\partial\tilde{t}}(t_{0}+\tau
h,t_{0}+\alpha h,\tilde{u}(\alpha))+
\frac{\partial y}{\partial\tilde{y}}(t_{0}+\tau h,t_{0}+\alpha h,\tilde{u}(\alpha))h\tilde{u}^{\prime}(\alpha))d\alpha\\
&=\int_{0}^{\tau}\Big(-h\frac{\partial
y}{\partial\tilde{y}}(t_{0}+\tau h,t_{0}+\alpha
h,\tilde{u}(\alpha))\big(A\tilde{u}(\alpha)+g(\tilde{u}(\alpha))\big)\\
&\quad+ \frac{\partial y}{\partial\tilde{y}}(t_{0}+\tau
h,t_{0}+\alpha h,\tilde{u}(\alpha))\big(hA
\tilde{u}(\alpha)+h\langle
P_{\tau,\sigma},g(\tilde{u}(\alpha))\rangle_{\alpha}\big)\Big)d\alpha\\
&=-h\int_{0}^{\tau}\Phi^{\tau}(\alpha)\big(g(\tilde{u}(\alpha))-\mathcal{P}_{h}(g\circ
\tilde{u})(\alpha)\big)d\alpha=\mathcal{O}(h^{r+1}),
\end{aligned}
\end{equation}
where $\Phi^{\tau}(\alpha)=\frac{\partial
y}{\partial\tilde{y}}(t_{0}+\tau h,t_{0}+\alpha
h,\tilde{u}(\alpha)).$ Letting $\tau=1$ in \eqref{stage} yields
\begin{equation}\label{inter}
\begin{aligned}
&\tilde{u}(1)-y(t_{0}+h)=-h\int_{0}^{1}\Phi^{1}(\alpha)\big(g(\tilde{u}(\alpha))-\mathcal{P}_{h}(g\circ \tilde{u})(\alpha)\big)d\alpha.\\
\end{aligned}
\end{equation}
 We partition the matrix-valued function $\Phi^{1}(\alpha)$  as
$\Phi^{1}(\alpha)=(\Phi_{1}^{1}(\alpha),\ldots,\Phi_{d}^{1}(\alpha))^{\intercal}$.
It follows from Lemma \ref{lemma1} that
\begin{equation}\label{pre1}
\Phi_{i}^{1}(\alpha)=\mathcal{P}_{h}\Phi_{i}^{1}(\alpha)+\mathcal{O}(h^{r}),\quad
i=1,\ldots,d.
\end{equation}
On the other hand, 
we have
\begin{equation}\label{pre2}
\begin{aligned}
&\int_{0}^{1}(\mathcal{P}_{h}\Phi_{i}^{1}(\alpha))^{\intercal}
g(\tilde{u}(\alpha)) d\alpha
=\int_{0}^{1}(\mathcal{P}_{h}\Phi_{i}^{1}(\alpha))^{\intercal}
\mathcal{P}_{h}(g\circ \tilde{u})(\alpha) d\alpha,\quad
i=1,\ldots,d.
\end{aligned}\end{equation}
 Therefore, from \eqref{inter}, \eqref{pre1} and \eqref{pre2},
it follows that
\begin{equation*}
\begin{aligned}
&\tilde{u}(1)-y(t_{0}+h)
= -h\int_{0}^{1}\left(\left(\begin{array}{c}(\mathcal{P}_{h}\Phi_{1}^{1}(\alpha))^{\intercal}\\
\vdots\\(\mathcal{P}_{h}\Phi_{d}^{1}(\alpha))^{\intercal}\end{array}\right)+\mathcal{O}(h^{r})\right)\big(g(\tilde{u}(\alpha))-\mathcal{P}_{h}(g\circ \tilde{u})(\alpha)\big)d\alpha\\
=&-h\int_{0}^{1}\left(\begin{array}{c}(\mathcal{P}_{h}\Phi_{1}^{1}(\alpha))^{\intercal}\big(g(\tilde{u}(\alpha))-\mathcal{P}_{h}(g\circ \tilde{u})(\alpha)\big)\\
\vdots\\(\mathcal{P}_{h}\Phi_{d}^{1}(\alpha))^{\intercal}\big(g(\tilde{u}(\alpha))-\mathcal{P}_{h}(g\circ
\tilde{u})(\alpha)\big)\end{array}\right)d\alpha
-h\int_{0}^{1}\mathcal{O}(h^{r})\times\mathcal{O}(h^{r})d\alpha\\
=&0+\mathcal{O}(h^{2r+1})=\mathcal{O}(h^{2r+1}).\\
\end{aligned}
\end{equation*}
\end{proof}

\section{Application in stiff gradient systems}\label{gradient systems}
When the matrix $Q$ in \eqref{IVPPP} is identity matrix, then the
system \eqref{IVPPP} is  a stiff gradient system as follows:
\begin{equation}\label{IVPPP-G}
y'=- \nabla U(y),\quad y(0)=y_{0}\in\mathbb{R}^{d},\quad t\in[0,T],
\end{equation}
where the potential $U$   has the form
\begin{equation}\label{spe-U}
  U(y)=\dfrac{1}{2}y^{\intercal}My+V(y).
\end{equation}
 Such problems arise from the
spatial discretisation of Allen--Cahn and Cahn--Hilliard PDEs (see,
e.g.  \cite{Barrett02}).   Along every exact solution, it is true
that
\begin{equation*}
 \frac{d}{dt}U(y(t))= \nabla
 U(y(t))^{\intercal}y'(t)=-y'(t)^{\intercal} y'(t)\leq0,
\end{equation*}
which  implies  that $U(y(t))$ is monotonically decreasing.

For solving this stiff gradient system, it follows from Theorem
\ref{DP} that our practical ECr method \eqref{PEEPCr} is
unconditionally energy-diminishing. For a quadratic potential (i.e.,
$V(y)=0$ in \eqref{spe-U}), the numerical solution of our method is
given by
\begin{equation*}  y_{1}=R(-hA)y_0=e^{ -h  A}y_{0}.
\end{equation*}
The importance of the damping property $ |R(\infty)| < 1$ for the
approximation properties of   Runge-Kutta methods has been studied
and  well understood in \cite{Lubich93,Lubich96} for solving
semilinear parabolic equations. The role of the condition  $
|R(\infty)| < 1$  in the approximation of stiff differential
equations has been researched in Chapter VI of
\cite{Hairer93-stiff}. It has been shown in \cite{hairer2013} that
for  each  Runge-Kutta method the energy decreases  once the
stepsize satisfies some conditions. Discrete-gradient methods, AVF
methods and AVF collocation methods derived in \cite{hairer2013} are
unconditionally energy-diminishing methods but they show no damping
for very stiff gradient systems.
  However,  it is   clear  that  our methods are unconditionally energy-diminishing methods and they have  $$ |R(\infty)| =|e^{
-\infty}|=0.$$ This  implies  that our methods are strongly damped
even for very stiff gradient systems and this is a significant
feature.

\section{Practical examples of the new methods}\label{methods}
In this  section,  we present the practical examples of the new
methods.   By the choice of $\tilde{\varphi}_{k}(\tau)=(\tau h)^k$
for $k=0,1,\ldots,r-1$ and using the Gram-Schmide process, we obtain
the standard orthonormal basis
 of $Y_{h}$ as
\begin{equation*}
\hat{p}_j(\tau)=(-1)^j\sqrt{2j+1}\sum\limits_{k=0}^ {j}{j
\choose{k}}{j+k \choose{k}}(-\tau)^k,\qquad j=0,1,\ldots,r-1,\qquad
\tau\in[0,1],
\end{equation*}
which are the shifted Legendre polynomials   on $[0,1]$. Therefore,
$P_{\tau,\sigma}$ can be determined by its limit as follows $
P_{\tau,\sigma}= \lim_{h\to 0}P_{\tau,\sigma}
=\sum_{i=0}^{r-1}\hat{p}_{i}(\tau)\hat{p}_{i}(\sigma), $
\subsection{An example of ECr methods} For the ECr method \eqref{EEPCr}, we need to calculate
$\bar{A}_{\tau,\sigma}(A)$ appearing in the methods. It follows form
\eqref{Aexplicit} that
\begin{equation}\label{barA}
\begin{aligned}
&\bar{A}_{\tau,\sigma}(A)=\int_{0}^1 e^{(1-\xi)\tau h A}P_{\xi
\tau,\sigma}d\xi=\sum_{i=0}^{r-1}
 \int_{0}^1 e^{(1-\xi)\tau h A}\hat{p}_{i}(\xi \tau)d\xi
 \hat{p}_{i}(\sigma)\\
 =&\sum_{i=0}^{r-1}
 \int_{0}^1 e^{(1-\xi)\tau h A}(-1)^i\sqrt{2i+1}\sum\limits_{k=0}^ {i}{i \choose{k}}{i+k
\choose{k}}(-\xi \tau)^k d\xi \hat{p}_{i}(\sigma)\\
=&\sum_{i=0}^{r-1}\sqrt{2i+1}\sum\limits_{k=0}^
{i}(-1)^{i+k}\frac{(i+k)!}{k!(i-k)!}\bar{\varphi}_{k+1}(\tau hA)
\hat{p}_{i}(\sigma).\end{aligned}
\end{equation}
Here  the $\bar{\varphi}$-functions (see, e.g.
\cite{Hochbruck1998,Hochbruck2005,Hochbruck2010,Hochbruck2009}) are
defined by:
\begin{equation*}
\bar{\varphi}_0(z)=e^{z},\ \ \bar{\varphi}_k(z)=\int_{0}^1
e^{(1-\sigma)z}\frac{\sigma^{k-1}}{(k-1)!}d\sigma, \ \ k=1,2,\ldots.
\label{phi}%
\end{equation*}
It is noted that  a number of approaches have been developed which
work with the application of the $\varphi$-functions on a vector
(see \cite{Higham2011,Hochbruck1997,Hochbruck2010}, for example).
\subsection{An example of TCr methods} For the TCr
method \eqref{EEPCr of erkn} of solving $q^{\prime\prime}(t)+\Omega
q(t)=-\nabla U(q(t))$, we need to compute
$\mathcal{A}_{\tau,\sigma}$ and $\mathcal{B}_{1,\sigma}$.  It
follows from \eqref{AB of erkn} that
\begin{equation*}
\begin{aligned} &\mathcal{A}_{\tau,\sigma}(K)= \sum_{j=0}^{r-1}
 \int_{0}^1(1-\xi) \phi_{1} \big((1-\xi)^2K\big)\hat{p}_{j}(\xi
 \tau)d\xi \hat{p}_{j}(\sigma)\\
 =&\sum_{j=0}^{r-1}\sqrt{2j+1}\sum\limits_{l=0}^{\infty}(-1)^j\sum\limits_{k=0}^
{j}{j \choose{k}}{j+k \choose{k}}\int_{0}^1(-\xi )^k(1-\xi )^{2l+1}d\xi \dfrac{(-1)^{l}K^{l}}{(2l+1)!}\tau^k\hat{p}_{j}(\sigma)\\
=&\sum_{j=0}^{r-1}\sqrt{2j+1}\sum\limits_{l=0}^{\infty}\sum\limits_{k=0}^
{j}(-1)^{j+k}{j \choose{k}}{j+k \choose{k}}\dfrac{k!(2l+1)!}{(2l+k+2)!}\dfrac{(-1)^{l}K^{l}}{(2l+1)!}\tau^k\hat{p}_{j}(\sigma)\\
=&\sum_{j=0}^{r-1}\sqrt{2j+1}\hat{p}_{j}(\sigma)\sum\limits_{l=0}^{\infty}\sum\limits_{k=0}^
{j} \dfrac{(-1)^{j+k+l}(j+k)!}{k!(j-k)!(2l+k+2)!}\tau^k K^{l}.
 \end{aligned}
\end{equation*}
Recall that the generalized hypergeometric function ${}_mF_{n}$  is
defined by \begin{equation}
\begin{aligned} {}_mF_{n}\left[\begin{matrix}\alpha_1,\alpha_2,\ldots,\alpha_m;\\ \beta_1,\beta_2,\ldots,\beta_n;\end{matrix}
x\right]=\sum\limits_{l=0}^{\infty}\dfrac{\prod_{i=1}^{m}(\alpha_i)_l}{\prod_{i=1}^{n}(\beta_i)_l}\dfrac{x^l}{l!},
\end{aligned}
\label{hypergeometric}%
\end{equation}
where $\alpha_i$ and $\beta_i$ are arbitrary complex numbers, except
that $\beta_i$  can be neither zero nor  a negative
 integer, and  $(z)_l$ is the   Pochhammer symbol which is defined as
$$(z)_0=1,\ \ (z)_l=z(z+1)\cdots(z+l-1),\ \ \ l\in \mathbb{N}.$$
Then, $\mathcal{A}_{\tau,\sigma}$ can be expressed  by
\begin{equation}\label{A re of erkn}
\begin{aligned} &\mathcal{A}_{\tau,\sigma}(K)
=\sum_{j=0}^{r-1}\sqrt{2j+1}\hat{p}_{j}(\sigma)\sum\limits_{l=0}^{\infty}\frac{(-1)^{j+l}}{(2l+2)!}{}_2F_{1}\left[\begin{matrix}-j,j+1;\\
2l+3;\end{matrix} \tau\right]K^{l}.\\
 \end{aligned}
\end{equation}

Likewise, we can obtain
 \begin{equation}\label{B}%
\begin{aligned} &\mathcal{B}_{1,\sigma}(K)=
\sum_{j=0}^{r-1}\sqrt{2j+1}\hat{p}_{j}(\sigma)S_{j}(K),
 \end{aligned}
\end{equation}
where $S_{j}(K)$ are given by
\begin{equation}
\begin{aligned}
S_{2j}(K)
=&(-1)^j \frac{(2j)!}{(4j+1)!}K^j{}_0F_{1}\left[\begin{matrix}-;\\
 \frac{1}{2};\end{matrix}  -\frac{K}{16}\right]{}_0F_{1}\left[\begin{matrix}-;\\
2j+\frac{3}{2};\end{matrix}  -\frac{K}{16}\right],\\
S_{2j+1}(K)
=&(-1)^{j} \frac{(2j+2)!}{(4j+4)!}K^{j+1} {}_0F_{1}\left[\begin{matrix}-;\\
 \frac{3}{2};\end{matrix}  -\frac{K}{16}\right]{}_0F_{1}\left[\begin{matrix}-;\\
2j+\frac{5}{2};\end{matrix}  -\frac{K}{16}\right],\quad j=0,1,\ldots.\\
\end{aligned}
\label{Computate S}%
\end{equation}

%

\subsection{An example of RKNCr methods} By letting
$K=0$ in the above analysis, we obtain an example of RKNCr methods
for solving the general second-order ODEs \eqref{ge2oprob} as
\begin{equation*}
\left\{\begin{aligned} q_{d_i}=& q_0 +d_i h p_0 - d_i ^2 h^2
\int_{0}^1 \mathcal{\bar{A}}_{d_i,\sigma}
\nabla U\Big(\sum_{m=1}^{r}q_{d_m}l_{m}(\sigma)\Big)d\sigma, \ \   i=1,\ldots,r,\\
q_{1}=& q_0 +  h p_0 -  h^2 \int_{0}^1 \mathcal{\bar{A}}_{1,\sigma}
\nabla U\Big(\sum_{m=1}^{r}q_{d_m}l_{m}(\sigma)\Big)d\sigma, \\
p_{1}= &  p_0-h  \int_{0}^1
 \mathcal{\bar{B}}_{1,\sigma}   \nabla U\Big(\sum_{m=1}^{r}q_{d_m}l_{m}(\sigma)\Big)d\sigma,\\
\end{aligned}\right.
\end{equation*}
where $\bar{\mathcal{A}}_{\tau,\sigma} = \sum_{i=0}^{r-1}
 \int_{0}^1(1-\xi) \hat{p}_{i}(\xi
 \tau)d\xi \hat{p}_{i}(\sigma)\ \textmd{and}\ \bar{\mathcal{B}}_{1,\sigma}= \sum_{i=0}^{r-1}
 \int_{0}^1  \hat{p}_{i}(\xi  )d\xi
\hat{p}_{i}(\sigma). $

\begin{rem}
It is noted that one can make  different choices of  $Y_h$ and $X_h$
and the whole analysis presented in this paper still holds.
Different choices will produce different practical methods, and in
this paper, we do not go further on this point for brevity.
\end{rem}

\section{Numerical experiments}\label{Numerical experiments}
Applying the $r$-point Gauss--Legendre's quadrature  to the integral
of
 \eqref{EEPCr} yields
\begin{equation}\label{PEEPCr}
\left\{\begin{aligned} &y_{c_{i}}=e^{c_{i} h
A}y_{0}+c_{i}h\sum_{j=1}^{r}b_j
  \bar{A}_{c_{i},c_j}(A)
g(y_{c_{j}}),\quad i=1,\ldots,r,\\
&y_{1}=e^{  h A}y_{0}+ h \sum_{j=1}^{r}b_j
  \bar{A}_{1,c_j}(A)
g(y_{c_{j}}),
\end{aligned}\right.
\end{equation}
where $c_{j}$ and $ b_{j}$ with $j=1,\ldots,r$ are the   nodes and
weights of the quadrature, respectively. In order to  show the
efficiency and robustness of our methods, we take $r=2$  and denote
the corresponding method by EC2P.    Then we choose the same $Y_{h}$
and $X_{h}$  for the functionally fitted energy-preserving method
developed in \cite{Li_Wu(na2016)}, and by this choice, the method
becomes the $2r$th order RKEPC method given in \cite{Hairer2010}.
For this method, we choose $r=2$ and approximate the integral  by
the Lobatto quadrature of order eight, which is precisely the
``extended Labatto IIIA method of order four" in
\cite{Iavernaro2009}. We denote the method as RKEPC2. Another
integrator we select for comparisons is the explicit three-stage
exponential integrator of order four derived in \cite{Hochbruck2009}
which is denoted by EEI3s4.  It is noted  that the first two methods
are   implicit and we
  set $10^{-16}$ as the error tolerance
 and $5$ as the maximum number of each fixed-point iteration. It is also remarked that in this paper, we only
 demonstrate the efficiency of ECr methods  when
 applied to  first-order
 systems,  for brevity.  Numerical comparisons   of TCr and RKNCr methods for solving  second-order
 highly oscillatory systems will be presented elsewhere.

\vskip2mm\noindent\textbf{Problem 1.} Consider the Duffing equation
defined by
\begin{equation*}
\begin{aligned}& \left(
                   \begin{array}{c}
                     q \\
                      p \\
                   \end{array}
                 \right)
'= \left(
    \begin{array}{cc}
      0& 1\\
       -\omega^{2}-k^{2} &0 \\
    \end{array}
  \right)\left(
                   \begin{array}{c}
                     q \\
                      p \\
                   \end{array}
                 \right)+
\left(
                                                                           \begin{array}{c}
                                                                           0\\
2k^{2}q^{3}
                                                                           \end{array}
                                                                         \right),\
                                                                         \ \left(
                                                                            \begin{array}{c}
                                                                              q(0) \\
                                                                              p(0) \\
                                                                            \end{array}
                                                                          \right)=\left(
                                                                                    \begin{array}{c}
                                                                                      0 \\
                                                                                      \omega \\
                                                                                    \end{array}
                                                                                  \right).
\end{aligned}\end{equation*}
 It is a Hamiltonian system with  the Hamiltonian:
\begin{equation*}
H(p,q)=\frac{1}{2}p^{2}+\frac{1}{2}(\omega^{2}+k^{2})q^{2}-\frac{k^{2}}{2}q^{4}.
\end{equation*}
The exact solution of this system is $q(t)=sn(\omega t;k/\omega)$
with the  Jacobi elliptic function  $sn$. Choose $k=0.07,
\omega=5,10,20$  and   solve the problem in the interval $[0, 1000]$
with different stepsizes
 $h= 0.1/2^{i}$ for $i=0,\ldots3.$  The   global errors are
presented in Figure \ref{p1-1}. Then, we integrate this problem with
 the stepsize $h=1/100$  in the interval $[0, 10000].$ See Figure
\ref{p1-2}  for the energy conservation for different methods.

 \begin{figure}[ptb]
\centering
 \includegraphics[width=4cm,height=6cm]{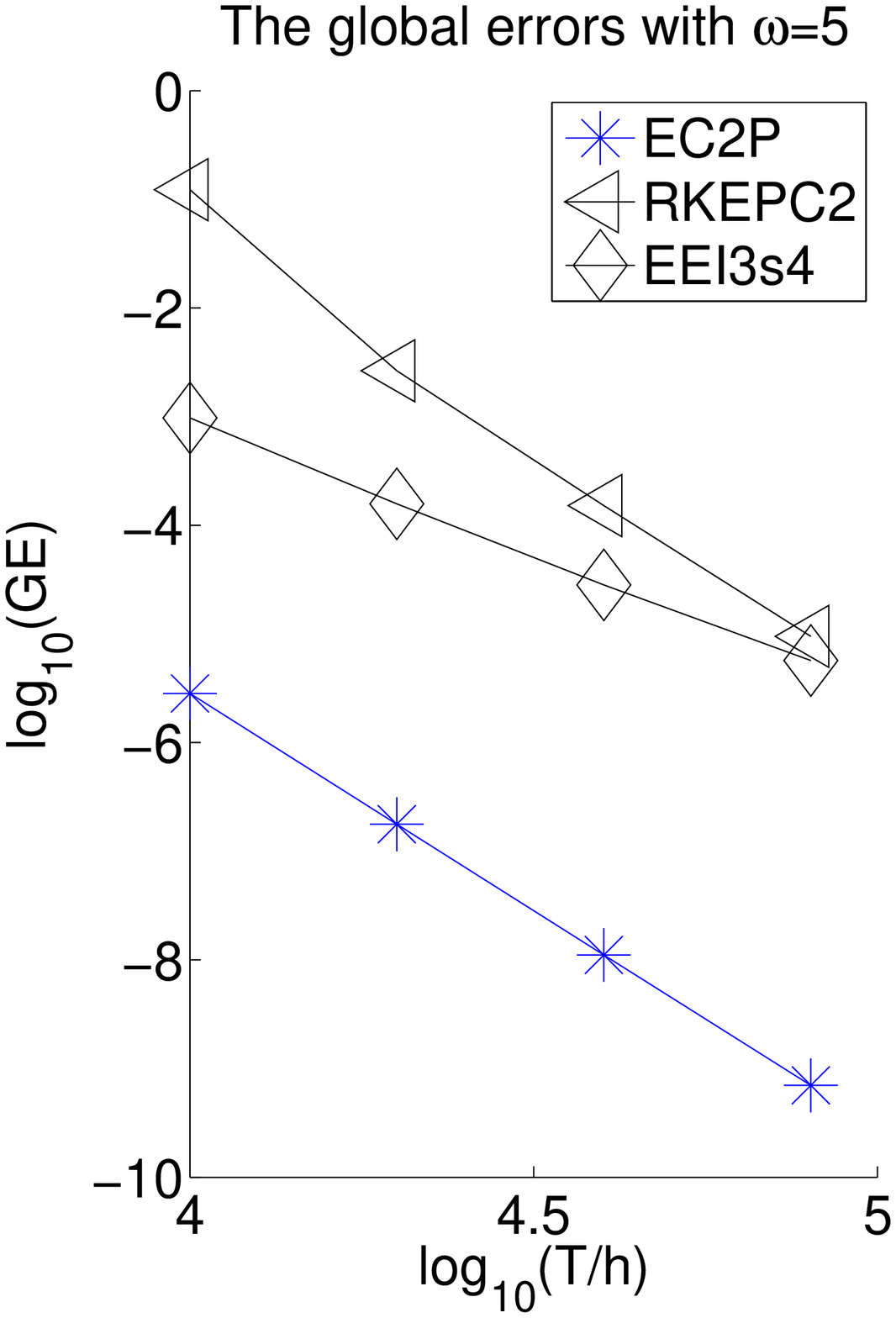}
 \includegraphics[width=4cm,height=6cm]{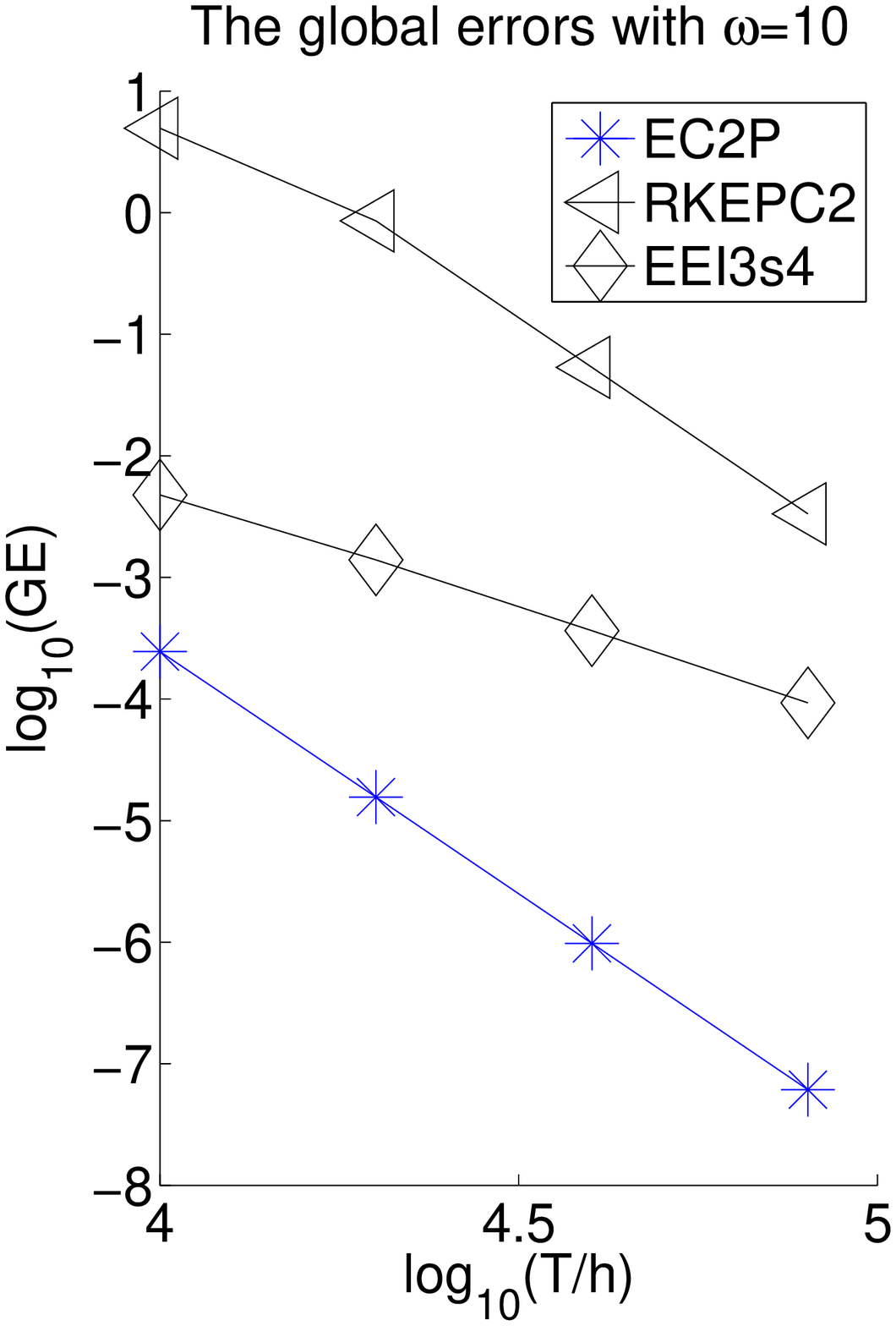}
 \includegraphics[width=4cm,height=6cm]{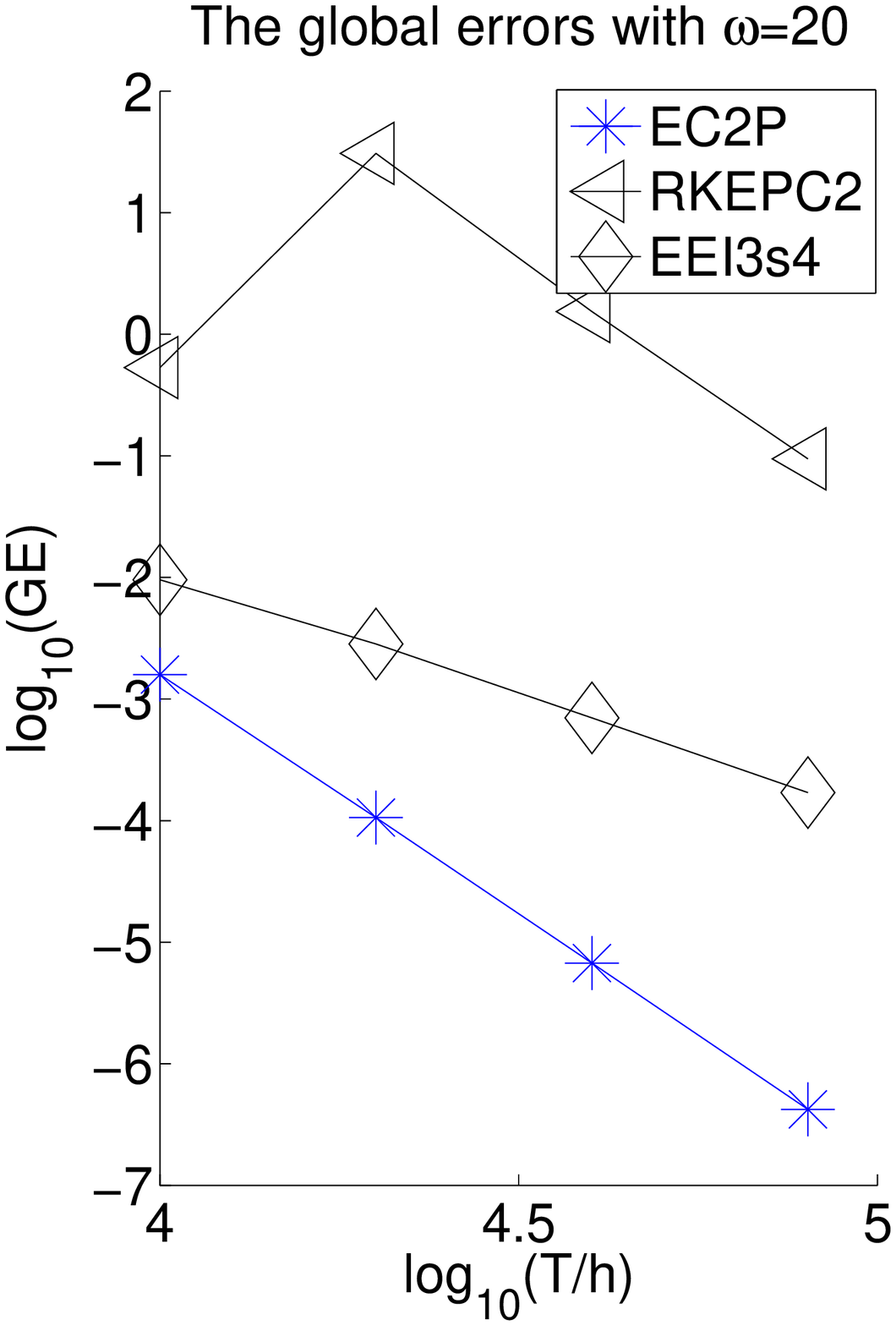}
\caption{The logarithm of the  global error against the logarithm of
$T/h$.} \label{p1-1}
\end{figure}

\begin{figure}[ptb]
\centering
\includegraphics[width=4cm,height=6cm]{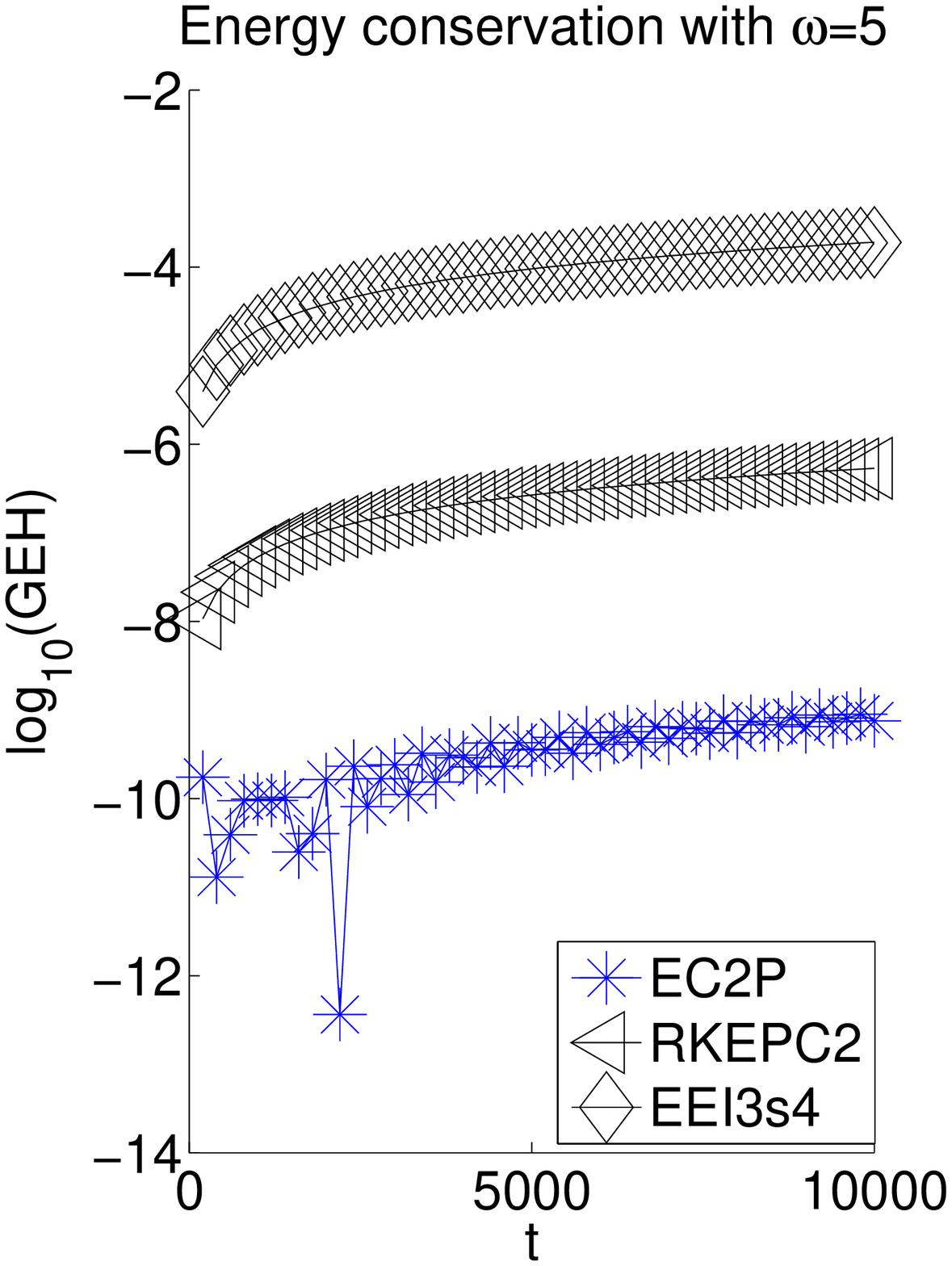}
\includegraphics[width=4cm,height=6cm]{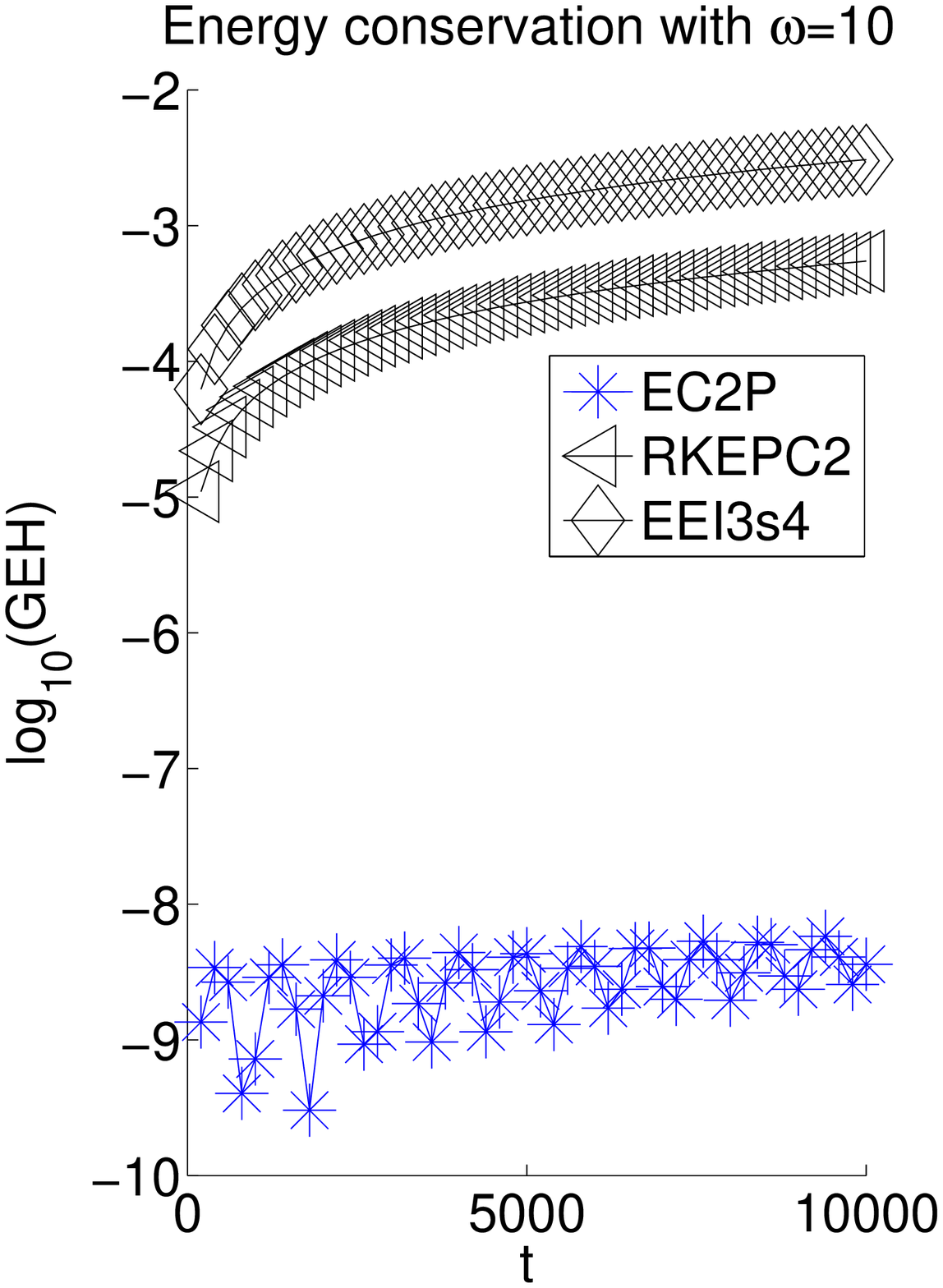}
\includegraphics[width=4cm,height=6cm]{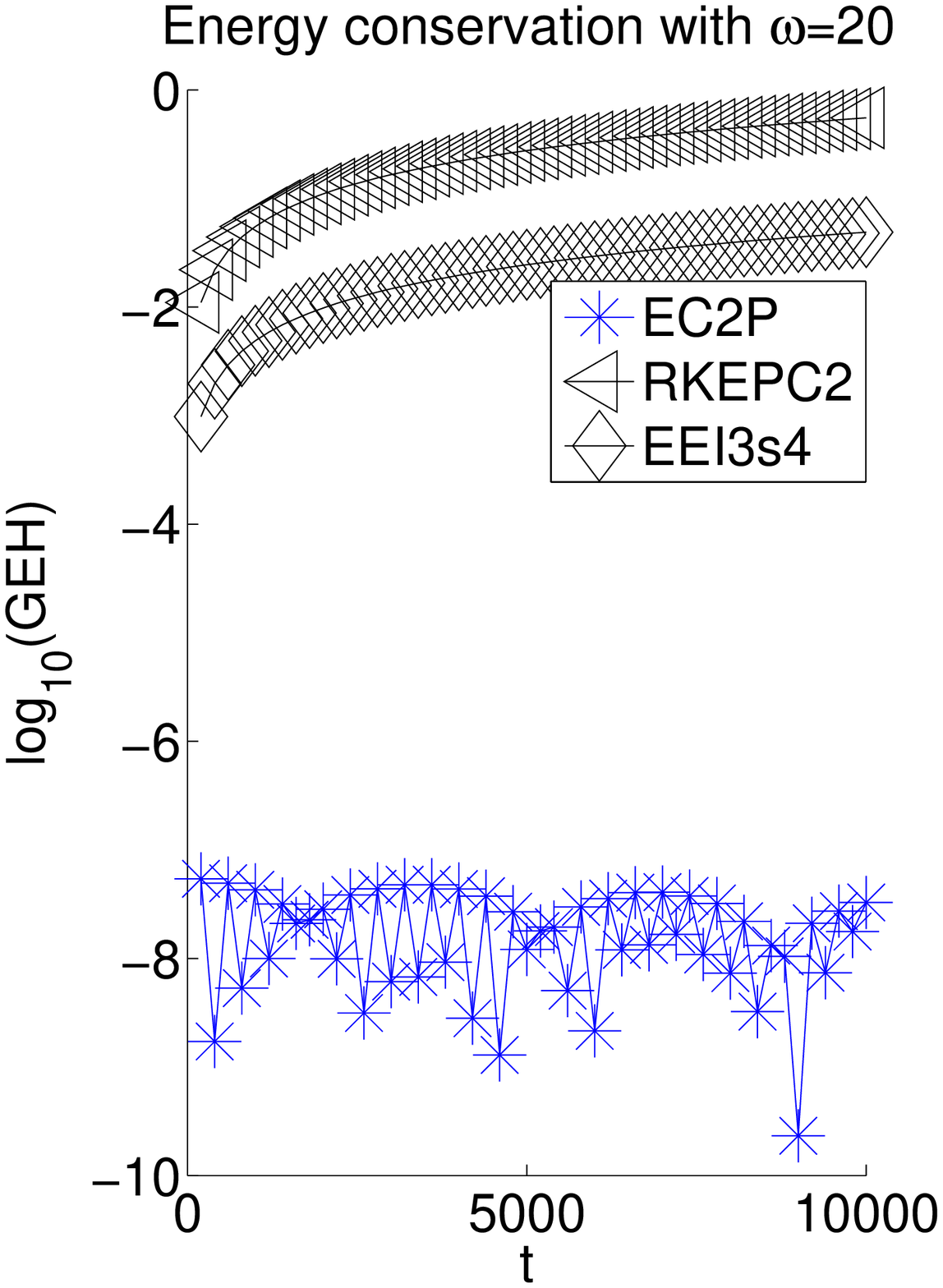}
\caption{The logarithm of the  error of Hamiltonian against $t$.}
\label{p1-2}
\end{figure}

\vskip2mm\noindent\textbf{Problem 2.}   Consider the following
averaged system in wind-induced
 oscillation (see
 \cite{Mclachlan-98})
\begin{equation*}
\begin{aligned}& \left(
                   \begin{array}{c}
                     x_1 \\
                      x_2 \\
                   \end{array}
                 \right)
'= \left(
    \begin{array}{cc}
      -\zeta& -\lambda\\
       \lambda & -\zeta \\
    \end{array}
  \right)\left(
                   \begin{array}{c}
                     x_1 \\
                      x_2 \\
                   \end{array}
                 \right)+
\left(
                                                                           \begin{array}{c}
                                                                           x_1x_2\\
\frac{1}{2}(x_1^2-x_2^2)
                                                                           \end{array}
                                                                         \right),
\end{aligned}\end{equation*}
 where $\zeta \geq
0$ is a damping factor and $\lambda$ is a detuning parameter. By
setting $$\zeta = r\cos(\theta),\qquad  \lambda =
r\sin(\theta),\qquad r \geq 0,\qquad 0 \leq\theta \leq \pi /2,$$
this system can be transformed into the scheme \eqref{IVPPP} with
\begin{equation*}
\begin{aligned}
&Q=\left(
    \begin{array}{cc}
      -\cos(\theta) & -\sin(\theta) \\
      \sin(\theta) & -\cos(\theta) \\
    \end{array}
  \right),\ \ M=\left(
                  \begin{array}{cc}
                    r & 0 \\
                    0 & r \\
                  \end{array}
                \right),\\
&V=-\frac{1}{2}\sin(\theta)\big(x_1x_2^2-\frac{1}{3}x_1^3\big)+\frac{1}{2}\cos(\theta)\big(-x_1^2x_2+\frac{1}{3}x_2^3\big).
\end{aligned}\end{equation*}
Its  first integral (conservative case, when $\theta =\pi/2$) or
Lyapunov function (dissipative case, when $\theta <\pi/2$)   is
$$H=\frac{1}{2}r(x_1^2+x_2^2)-\frac{1}{2}\sin(\theta)\big(x_1x_2^2-\frac{1}{3}x_1^3\big)+\frac{1}{2}\cos(\theta)\big(-x_1^2x_2+\frac{1}{3}x_2^3\big).$$
 The  initial values are
given by $x_1(0)=0,\ x_2(0)=1.$ Firstly we consider the conservative
case and  choose $\theta=\pi/2,\ r=20.$ The problem is integrated in
$[0,1000]$ with the stepsize $h=0.1/2^i$  for  $ i=1,\ldots,4$ and
the global errors are given  in Figure \ref{p2-1} (a). Then we solve
this system  with the stepsize $h=1/200$ in the interval $[0,
10000]$ and Figure \ref{p2-1} (b) shows the results of the energy
preservation. Secondly we  choose $\theta=\pi/2-10^{-4}$  and this
gives a dissipative system. The system is solved in $[0,1000]$ with
$h=0.1/2^i$  for  $ i=1,\ldots,4$ and the errors are presented in
Figure \ref{p2-2} (a). See Figure \ref{p2-2} (b) for the results of
Lyapunov function with $h=1/20$. Here we consider the results given
by EC2P with a  smaller stepsize $h=1/1000$ as the `exact' values of
Lyapunov function.

 \begin{figure}[ptb]
\centering
\includegraphics[width=4cm,height=6cm]{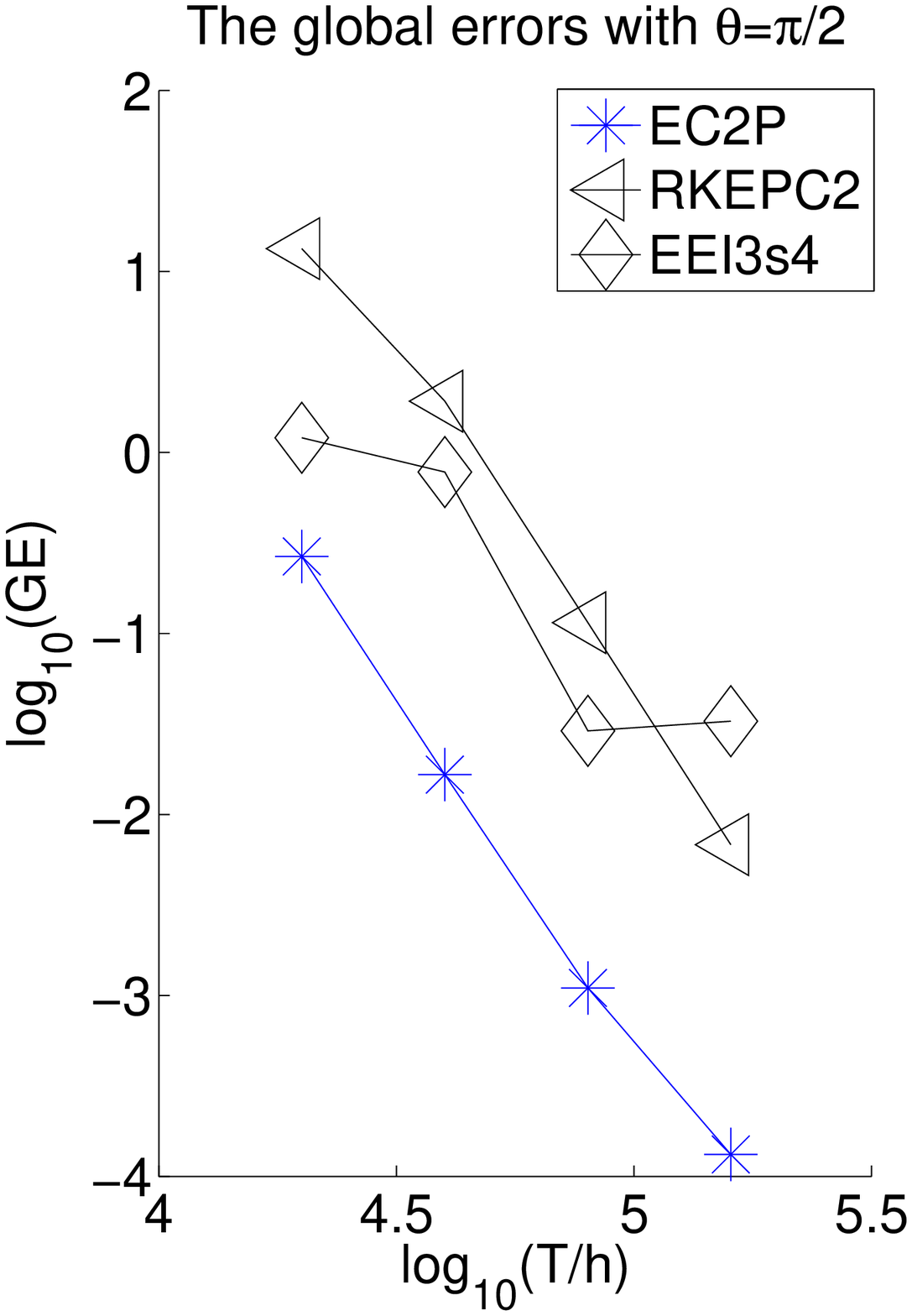}
\includegraphics[width=4cm,height=6cm]{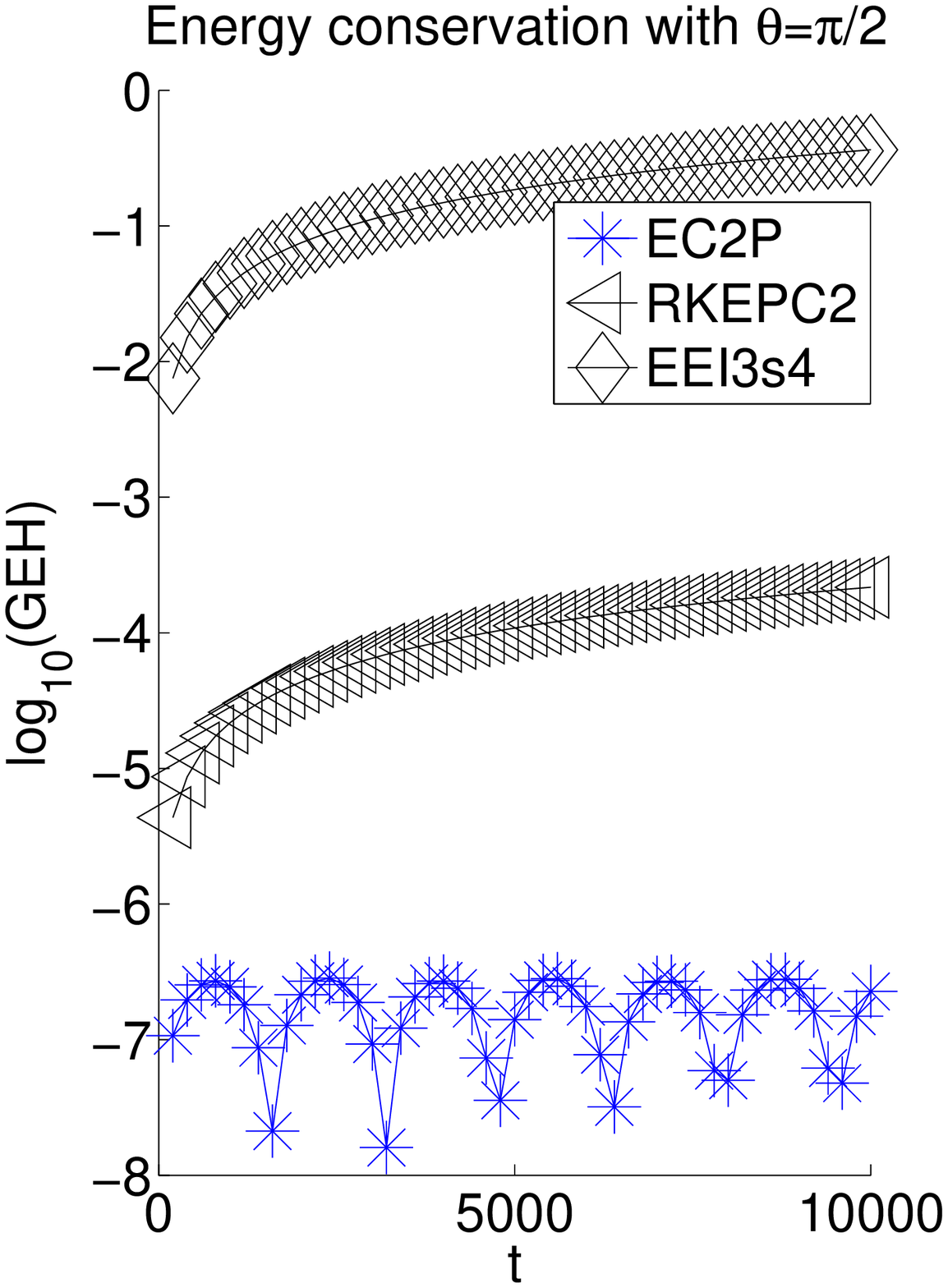}
\caption{(a) The logarithm of the  global error against the
logarithm of $T/h$. (b) The logarithm of the  error of Hamiltonian
against  $t$.} \label{p2-1}
\end{figure}

\begin{figure}[ptb]
\centering
\includegraphics[width=4cm,height=6cm]{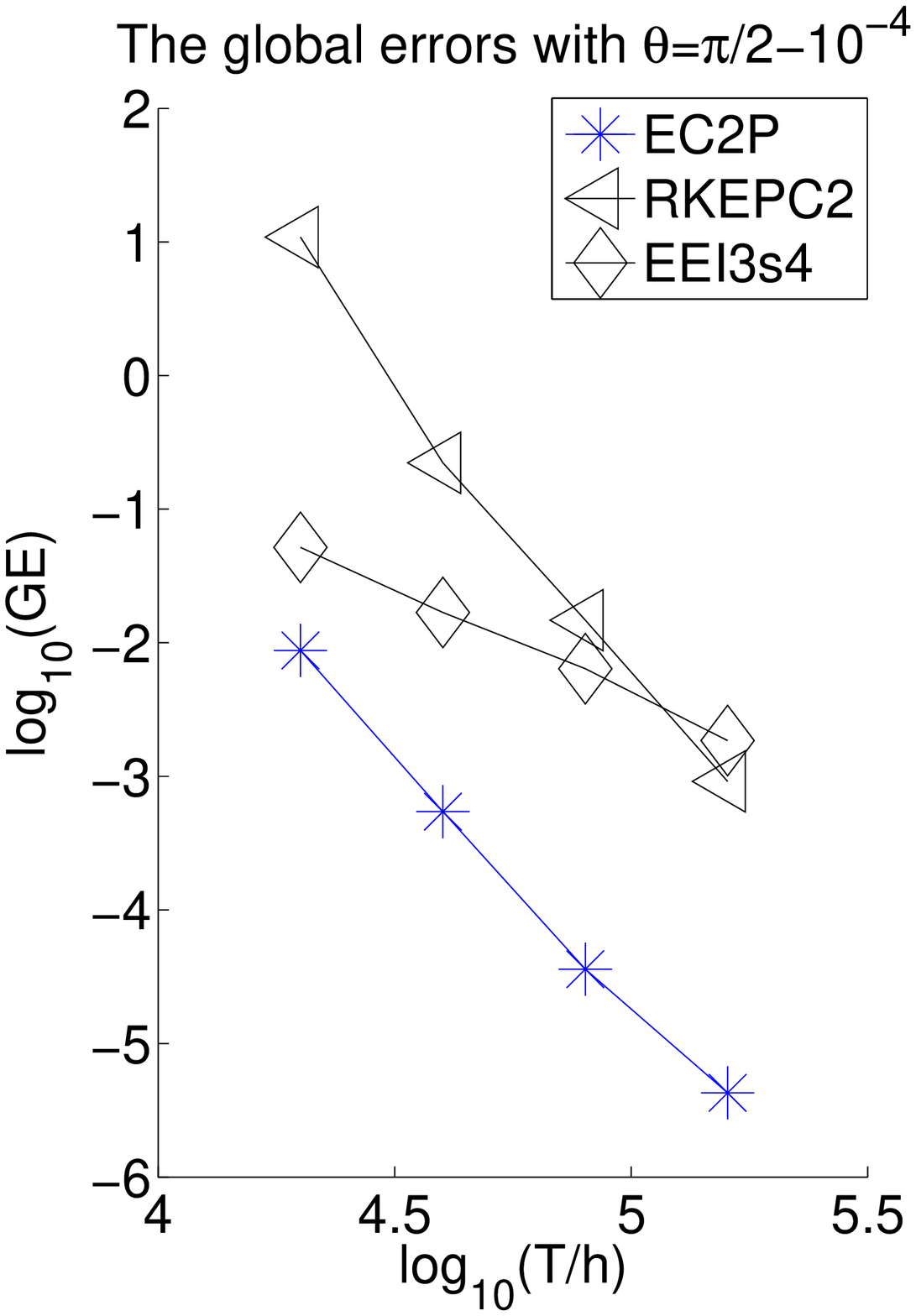}
\includegraphics[width=4cm,height=6cm]{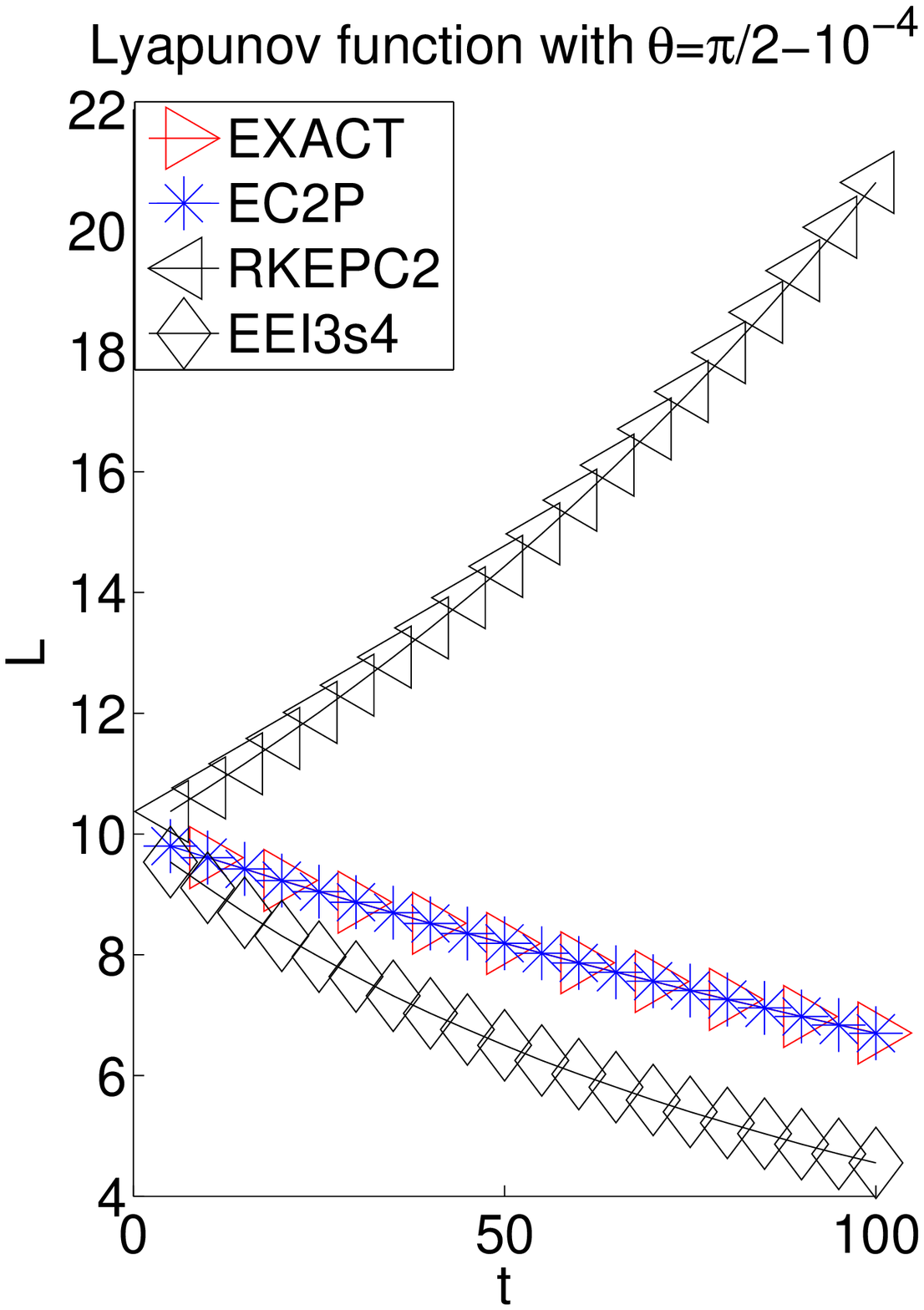}
\caption{(a) The logarithm of the  global error against the
logarithm of $T/h$. (b) The results of the Lyapunov function against
$t$.} \label{p2-2}
\end{figure}

\vskip2mm\noindent\textbf{Problem 3.} Consider  the nonlinear
Schr\"{o}dinger equation (see  \cite{Chen2001})
\begin{equation*}\begin{aligned}
&i\psi_{t}+\psi_{xx}+2|\psi|^{2}\psi=0,\quad
 \psi(x,0)= 0.5 + 0.025 \cos(\mu x)\\
\end{aligned}
\end{equation*}
with the periodic boundary condition $\psi(0,t)=\psi(L,t).$
Following \cite{Chen2001},  we choose $L =4\sqrt{2}\pi$ and $\mu =
2\pi/L.$ The initial condition chosen here is in the vicinity of the
homoclinic orbit. Using $\psi = p + \textmd{i}q,$ this equation  can
be rewritten as a pair of real-valued equations
\begin{equation*}\label{semi}
\begin{aligned}
&p_t +q_{xx} + 2(p^2  + q^2)q = 0,\\
&q_t -p_{xx} -2(p^2  + q^2)p = 0.\\
\end{aligned}
\end{equation*}
Discretising the spatial derivative $\partial_{xx}$ by the
pseudospectral method given in  \cite{Chen2001},  this problem is
converted into the following system:
\begin{equation}\label{semi sd}
\left(
  \begin{array}{c}
    \textbf{p} \\
    \textbf{q} \\
  \end{array}
\right)'=
 \begin{aligned}
 \left(
   \begin{array}{cc}
     0 & -D_2 \\
     D_2 & 0 \\
   \end{array}
 \right)\left(
  \begin{array}{c}
    \textbf{p} \\
    \textbf{q} \\
  \end{array}
\right)+\left(
          \begin{array}{c}
            -2(\textbf{p}^{2}+ \textbf{q}^{2})\cdot  \textbf{q} \\
            2( \textbf{p}^{2}+ \textbf{q}^{2})\cdot \textbf{p} \\
          \end{array}
        \right)
\end{aligned}
\end{equation}
where $\textbf{p}=(p_0,p_1,\ldots,p_{N-1})^{\intercal},\
\textbf{q}=(q_0,q_1,\ldots,q_{N-1})^{\intercal}$ and
$D_2=(D_2)_{0\leq j,k\leq N-1}$ is the pseudospectral differential
matrix defined by:
\begin{equation*}
(D_2)_{jk}=\left\{\begin{aligned}
&\frac{1}{2}\mu^2(-1)^{j+k+1}\frac{1}{\sin^2(\mu(x_j-x_k)/2)},\quad j\neq k,\\
&-\mu^2\frac{2(N/2)^2+1}{6},\quad\quad\quad\quad\quad\quad\ \ \ \ \   j=k,\\
\end{aligned}\right.
\end{equation*}
with $x_j=j\frac{L}{N}$ for  $j=0,1,\ldots,N-1.$ The Hamiltonian  of
\eqref{semi sd} is
\begin{equation*}
H(\textbf{p},\textbf{q})=\frac{1}{2}\textbf{p}^{\intercal}D_{2}\textbf{p}+\frac{1}{2}\textbf{q}^{\intercal}D_{2}\textbf{q}
+ \frac{1}{2}\sum_{i=0}^{N-1}(p_{i}^{2}+q_{i}^{2})^{2}.
\end{equation*}
 We choose $N=128$ and
first  solve the problem in the interval $[0, 10]$ with
 $h= 0.1/2^{i}$ for  $i=3,\ldots,6.$  See Figure \ref{p3} (a) for the global errors. Then,  this problem is   integrated
 with
$h=1/200$ in  $[0, 1000]$  and the  energy conservation is presented
in Figure \ref{p3} (b).

 \begin{figure}[ptb]
\centering
\includegraphics[width=4cm,height=6cm]{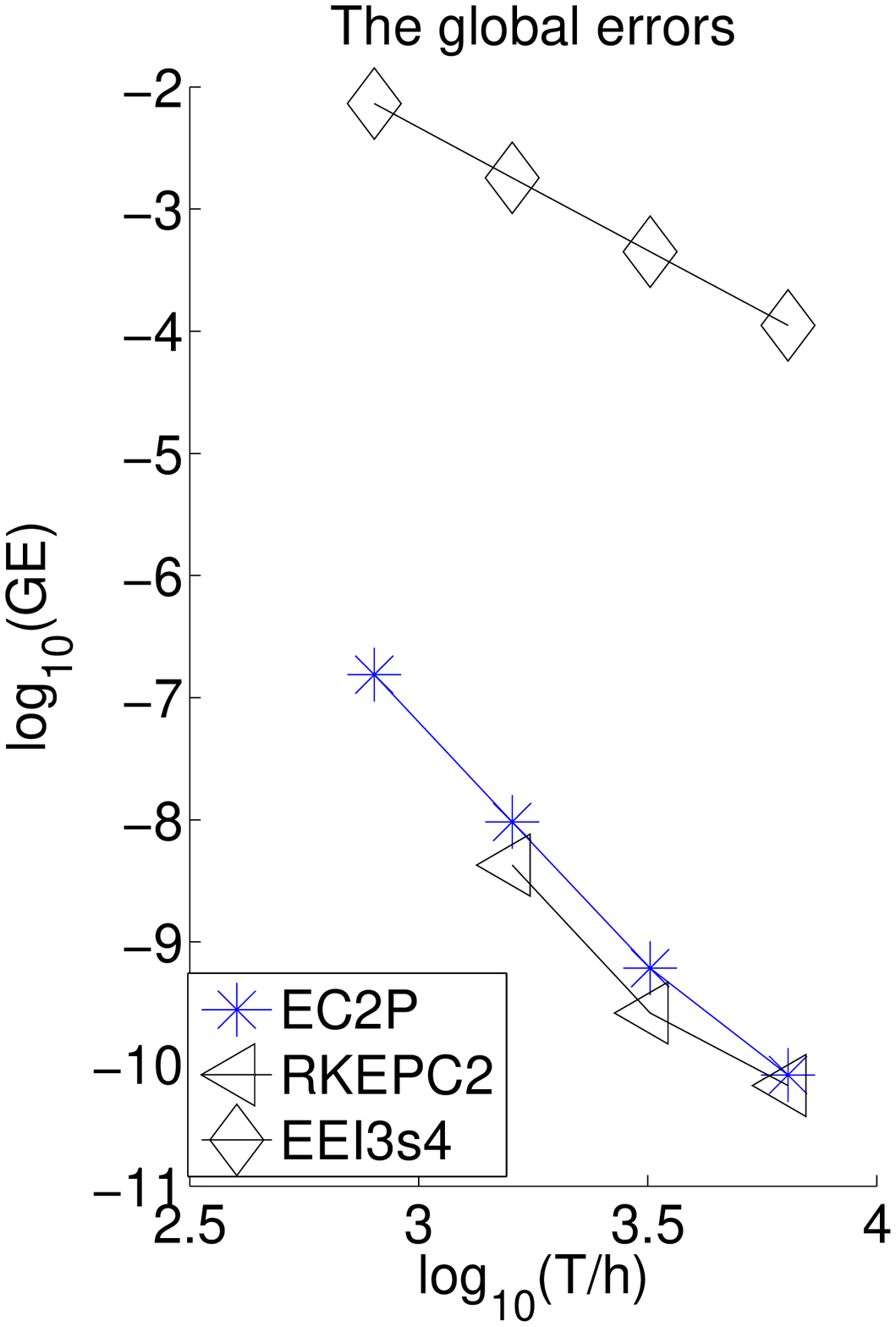}
\includegraphics[width=4cm,height=6cm]{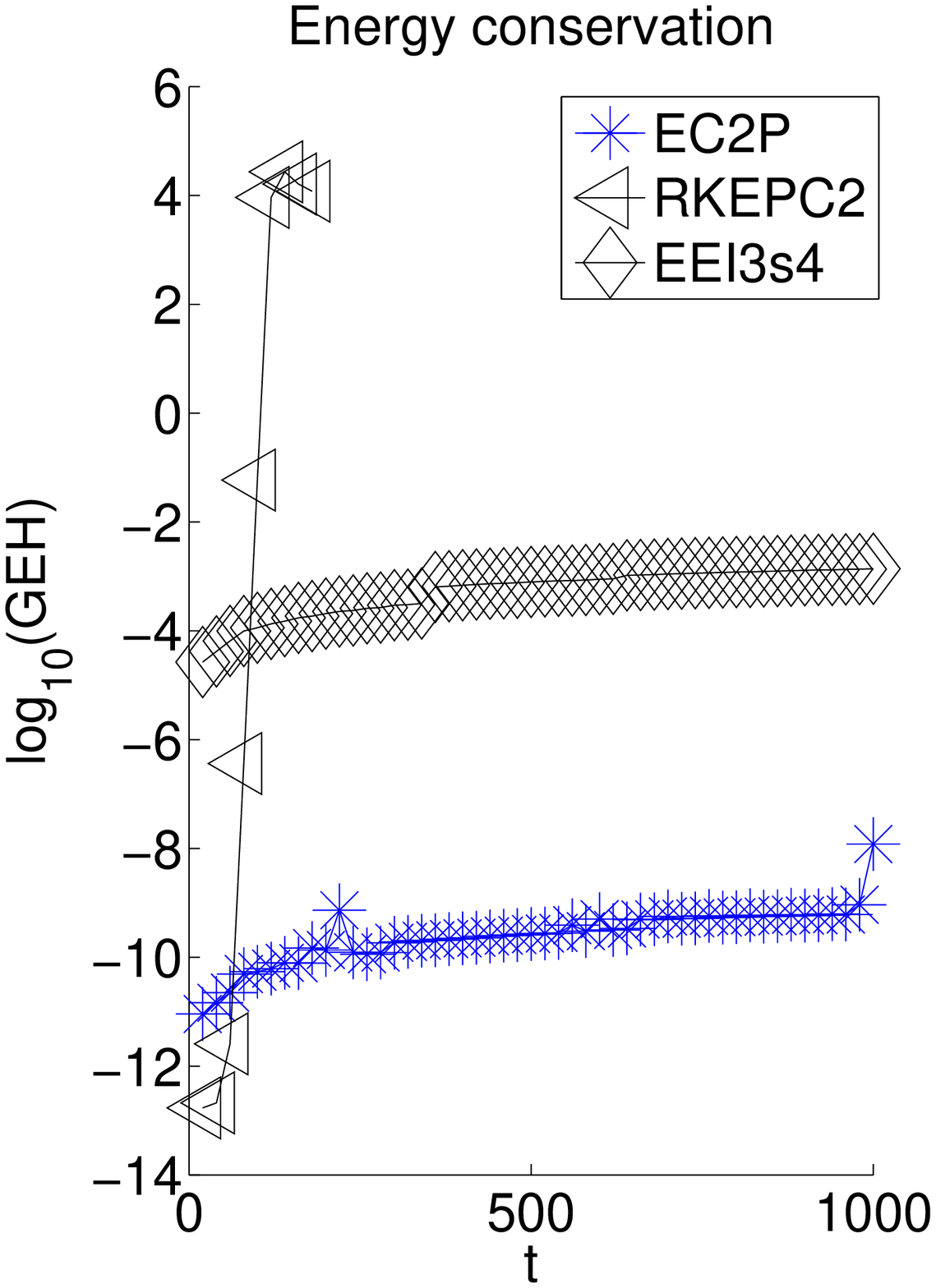}
\caption{(a) The logarithm of the  global error against the
logarithm of $T/h$. (b) The logarithm of the  error of Hamiltonian
against  $t$.} \label{p3}
\end{figure}

 It can be concluded from these numerical experiments that our
EC2P method definitely shows  higher accuracy,  more excellent
invariant-preserving property, and  prominent long-term behavior in
the numerical simulations, than the other effective methods in the
literature.

\section{Concluding remarks and discussions} \label{sec:conclusions}
 Exponential integrators  have constituted an
important class of methods for the numerical simulation of
first-order ODEs, including the semi-discrete nonlinear
Schr\"{o}dinger equation etc. Finite element methods for ODEs can
date back to early 1960s  and they have been investigated by many
researchers. In this paper, combining the ideas of these two
effective methods, we derived and analysed a new kind of exponential
collocation methods for the conservative or dissipative system
\eqref{IVPPP}. We have also rigorously analysed
 the properties including existence and uniqueness, and algebraic order. It has been proved that our novel methods can be
arbitrary-order accuracy as well as exactly or nearly preserving
first integrals or Lyapunov functions. The application of our
methods in stiff gradient systems was discussed.
 The efficiency and superiority of the new methods were numerically demonstrated by
performing some   experiments. By the analysis    of this paper,
arbitrary-order energy-preserving methods   were presented for
second-order highly oscillatory/general systems.

Last but not least, it is  noted that there are still  some issues
of the methods which can  be further considered.

\begin{itemize}\itemsep=-0.2mm

\item  The error bounds and
convergence properties of the methods   will be discussed in another
work.


\item Another issue for   exploration is the application  of our   methodology in   PDEs
such as  nonlinear Sch\"{o}rdinger equations and wave equations. We
have
  derived exponential  integrators to
preserve the continuous energy of Sch\"{o}rdinger equations (see
\cite{wang2018 S}).

\item   The application  of our   methodology in   other ODEs
such as  general gradient systems and Poisson systems will also be
considered.

\end{itemize}

\section*{Acknowledgement}

The authors are grateful to Professor Christian Lubich for his
careful reading of the manuscript and for his helpful comments. It
is also his idea that motivates Section \ref{gradient systems} of
this manuscript.

\end{document}